\newtheorem{theorem}{Theorem}[section]
\newtheorem*{theorem*}{Theorem}
\newtheorem{lemma}[theorem]{Lemma}
\newtheorem{corollary}[theorem]{Corollary}
\newtheorem{proposition}[theorem]{Proposition}
\newtheorem{remark}[theorem]{Remark}
\newtheorem{definition}[theorem]{Definition}
\newtheorem{example}[theorem]{Example}
\newcommand{\nc}{\newcommand}
\nc{\cH}{{\mathcal H}}
\nc{\cA}{{\mathcal A}}
\nc{\cG}{{\mathcal G}}
\nc{\cC}{{\mathcal C}}
\nc{\cD}{{\mathcal D}}
\nc{\cO}{{\mathcal O}}
\nc{\cI}{{\mathcal I}}
\nc{\cB}{{\mathcal B}}
\nc{\cY}{{\mathcal Y}}
\nc{\cK}{{\mathcal K}} 
\nc{\cX}{{\mathcal X}}
\nc{\cS}{{\mathcal S}}
\nc{\cE}{{\mathcal E}}
\nc{\cF}{{\mathcal F}}
\nc{\cZ}{{\mathcal Z}}
\nc{\cQ}{{\mathcal Q}}
\nc{\cN}{{\mathcal N}}
\nc{\cP}{{\mathcal P}}
\nc{\cL}{{\mathcal L}}
\nc{\cM}{{\mathcal M}}
\nc{\cT}{{\mathcal T}}
\nc{\cW}{{\mathcal W}}
\nc{\cU}{{\mathcal U}}
\nc{\cJ}{{\mathcal J}}
\nc{\cV}{{\mathcal V}}
\nc{\cR}{{\mathcal R}}
\nc{\bH}{{\mathbb H}}
\nc{\bA}{{\mathbb A}}
\nc{\bG}{{\mathbb G}}
\nc{\bC}{{\mathbb C}}
\nc{\bO}{{\mathbb O}}
\nc{\bI}{{\mathbb I}}
\nc{\bB}{{\mathbb B}}
\nc{\bY}{{\mathbb Y}}
\nc{\bK}{{\mathbb K}} 
\nc{\bX}{{\mathbb X}}
\nc{\bS}{{\mathbb S}}
\nc{\bE}{{\mathbb E}}
\nc{\bF}{{\mathbb F}}
\nc{\bZ}{{\mathbb Z}}
\nc{\bQ}{{\mathbb Q}}
\nc{\bN}{{\mathbb N}}
\nc{\bP}{{\mathbb P}}
\nc{\bL}{{\mathbb L}}
\nc{\bM}{{\mathbb M}}
\nc{\bT}{{\mathbb T}}
\nc{\bW}{{\mathbb W}}
\nc{\bU}{{\mathbb U}}
\nc{\bD}{{\mathbb D}}
\nc{\bJ}{{\mathbb J}}
\nc{\bV}{{\mathbb V}}
\nc{\bR}{{\mathbb R}}
\nc{\OO}{\mathcal{O}}
\nc{\PP}{\mathbb{P}}
\DeclareMathOperator{\id}{id}
\DeclareMathOperator{\im}{Im}
\DeclareMathOperator{\Hom}{Hom}
\DeclareMathOperator{\Img}{Im}
\DeclareMathOperator{\Ker}{Ker}
\DeclareMathOperator{\Sym}{Sym}
\DeclareMathOperator{\Rk}{Rk}
\DeclareMathOperator{\rk}{Rk}
\DeclareMathOperator{\Edim}{Edim}
\newcommand{\KS}{KS}
\nc{\fA}{{\mathfrak{A}}}
\nc{\fB}{{\mathfrak{B}}}
\nc{\fC}{{\mathfrak{C}}}
\nc{\fD}{{\mathfrak{D}}}
\nc{\fE}{{\mathfrak{E}}}
\nc{\fF}{{\mathfrak{F}}}
\nc{\p}{\partial}
\nc{\ph}{\hat{\partial}}
\nc{\war}{{\color{red} CHECK}}
\begin{document}

\title[Infinitesimal variation functions for families of smooth varieties]{Infinitesimal variation functions for\\ families of smooth varieties}

\date{\today}

\author{Filippo Francesco Favale}
\address{Dipartimento di Matematica e Applicazioni,
	Universit\`a degli Studi di Milano-Bicocca,
	Via Roberto Cozzi, 55,
	I-20125 Milano, Italy}
\email{filippo.favale@unimib.it}

\author{Gian Pietro Pirola}
\address{Dipartimento di Matematica,
	Universit\`a degli Studi di Pavia,
	Via Ferrata, 5
	I-27100 Pavia, Italy}
\email{gianpietro.pirola@unipv.it}

\date{\today}
\thanks{
\textit{2010 Mathematics Subject Classification}: Primary:  14D07; Secondary: 14H10, 14H15, 14J70, 13E10\\
\textit{Keywords}: Infinitesimal variation of Hodge structure, families, planar curves, Lefschetz properties  \\
ACKNOWLEDGEMENTS: \\
The authors are thankful to Enrico Schlesinger for helpful discussion on the topics of the paper. The authors are partially supported by INdAM - GNSAGA. The first named author is partially supported by ``2017-ATE-0253'' (Dipartimento di Matematica e Applicazioni - Universit\`a degli Studi di Milano-Bicocca). The second named author is partially supported by PRIN 2017 \emph{``Moduli spaces and Lie theory''} and by (MIUR): Dipartimenti di Eccellenza Program (2018-2022) - Dept. of Math. Univ. of Pavia. 
}


\maketitle

\begin{abstract}
In this paper we introduce some {\it variation functions} associated to the rank of the Infinitesimal Variations of Hodge Structure for a family of smooth projective complex curves. We give some bounds and inequalities and, in particular, we prove that if $X$ is a smooth plane curve $X$, then there exists a first order deformation $\xi\in H^1(T_X)$ which deforms $X$ as plane curve, such that $\xi\cdot:H^0(\omega_X)\to H^1(\cO_{X})$ is an isomorphism. We also generalize the notions of variation functions to the higher dimensional case and we analyze the link between IVHS and the Weak and Strong Lefschetz properties of the Jacobian ring of a smooth hypersurface.
\end{abstract}


\section*{Introduction}

The period mapping associates  a variation of Hodge structure (VHS) (\cite{Gri_I,Gri_II}) to a family of complex algebraic varieties. This operation can be seen as a partially linearization functor. 
The periods of an algebraic variety are objects of transcendental nature, whereas its differential is of algebraic nature. This leads to the infinitesimal variation of Hodge structure (IVHS) (\cite{IVHS_I,IVHS_II,IVHS_III}), which is a complete linearization functor.

The IVHS has been proved to be an important tool to tackle several interesting problems and in dealing with a lot of applications. For several families of varieties, e.g. many smooth projective hypersurfaces, IVHS was proved powerful enough to give a complete reconstruction of the algebraic variety, a Torelli-type theorem (see \cite{Don,Voi,GrT}, for example).

Recently the IVHS of families curves has been studied in connection with the Fujita decomposition (see \cite{GT}), with the Xiao conjecture (see \cite{BGN,FNP,BPZ}) and with holomorphic forms on some moduli spaces (see \cite{FPT}). In these articles, very roughly, the authors tried to use the rank of differential of the period mapping in a systematic way. 
Accordingly, in this article we introduce some functions, associated to families of smooth curves (which we generalise then to the case of families of arbitrary smooth varieties) that give a quantitative measure of the variation of the family. We can call them {\it variation functions}. If the simplest one is the dimension of the image of the modular map (this has been studied in many papers - see \cite[Section 2.3]{GP}, for example - and again in the recent article \cite{DH}), in this paper we consider some variation functions which take into account the IVHS.

To be more concrete, consider, if $\pi:\cX\to B$ is a family of algebraic smooth curves and $0\in B$ let $X=\pi^{-1}(0)$ be an element of this family. The differential $d_0\cP$ of the period map $\cP$ in $0$ is the composition of two maps. The first one is the Kodaira-Spencer map $\KS:T_{B,0}\to H^1(T_X)$  and second is the IVHS map
$$\varphi:H^1(T_X)\to \Hom(H^0(\omega_X),H^1(\cO_X)),$$
which sends $\eta\in H^1(T_X)$ to the cup product by $\eta$ and it is the dual map of the multiplication map between canonical sections $H^0(\omega_X)^{\otimes 2}\to H^0(\omega_X^{\otimes 2})$. 
Inspired by the importance of the study of the ranks of these maps in many of the papers cited above,  we introduce two functions $d_M$ and $d_m$ which compute, respectively the maximum rank and the minimal rank of the cup product by elements in $KS(T_{B,0})\setminus\{0\}$ (for detail see Definition \ref{DEF:MAXMINVAR}). Then we use $d_M$ and $d_m$ to define the variation functions $\delta_M,\delta_m,\delta_M'$ and $\delta_m'$ associated to $\pi:\cX\to B$, which take into account all the elements of the family (see Definition \ref{DEF:MAXMINVARFAMIIES}). We say that a family $\pi$ has {\bf $I$-maximal variation} if $\delta_M'(\pi)=\min_{b\in B} d_M(\KS(T_{B,b}))$ is equal to the genus of the curves in this family. In particular, a family $\pi:\cX\to B$ has $I$-maximal variation if for all $b\in B$ there exists $\xi\in \KS(T_{B,b})$ such that $\xi\cdot$ is an isomorphism.
\vspace{2mm}

The structure of the paper is as follows: first of all, in Section \ref{SEC:VAR}, we introduce the variation functions and prove some general bounds on them. We also propose one of the possible generalizations to the case of (families of) smooth varieties of dimension $n$: the Yukawa-Coupling map (see \cite[Construction 1, pag. 53]{GrT})
$$\varphi:\Sym^n(H^1(T_X))\to \Hom(H^0(\omega_X)\to H^n(\cO_X))$$
sending $\bigotimes_{i=1}^{n}\xi_i$ to the cup product by $\Pi_{i=1}^{n}\xi_i$. In Section \ref{SEC:NOTATIONS}, we introduce some notations and prove some technical results that we need for proving our main result that is presented  
in Section \ref{SEC:VARPLA}, where we concentrate ourselves on the case of families of smooth plane curves and prove: 

\begin{theorem*} [Theorem \ref{THM:PLANARCURVESMAXVAR}]
Smooth plane curves of degree $d$ have $I$-maximal variations as smooth curves, i.e. if $\pi$ is the family of smooth curves induced by $|\cO_{\bP^2}(d)|$, we have  that $\delta_M'(\pi)=(d-1)(d-2)/2$.
\end{theorem*}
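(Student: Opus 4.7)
The plan is to translate, via Griffiths' Jacobian-ring description of the IVHS, the theorem into a purely algebraic existence statement in the Jacobian ring of $F$, and then settle it using Gorenstein duality together with a Strong Lefschetz type construction. Let $F\in S := \bC[x_0,x_1,x_2]$ of degree $d$ define $X$, and let $J(F) := (\partial_0 F, \partial_1 F, \partial_2 F)$. Because $X$ is smooth, $J(F)$ is a complete intersection of three forms of degree $d-1$, hence the Jacobian ring $R(F) := S/J(F)$ is a graded Artinian Gorenstein $\bC$-algebra with socle in degree $c := 3d-6$. Griffiths' classical residue theorems give canonical identifications $H^0(\omega_X)\cong R(F)_{d-3}$ and $H^1(\cO_X)\cong R(F)_{2d-3}$, and identify the image of $\KS$ coming from the family $B=|\cO_{\bP^2}(d)|$ with $R(F)_d$, in such a way that the cup product $\xi\cdot$ corresponds to multiplication by the associated class $G\in R(F)_d$. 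Moreover, since $(d-3)+(2d-3)=c$, the socle pairing gives a perfect pairing between $R(F)_{d-3}$ and $R(F)_{2d-3}$; in particular the two spaces share the same dimension $g=(d-1)(d-2)/2$, and the theorem reduces to showing: for every smooth $F\in S_d$ there exists $G\in R(F)_d$ such that $\mu_G\colon R(F)_{d-3}\to R(F)_{2d-3}$, $P\mapsto GP$, is an isomorphism, equivalently such that the symmetric bilinear form $B_G(P,Q) := [GPQ]_c$ on $R(F)_{d-3}$ is non-degenerate.

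The natural candidate is $G = \ell^d$ for a linear form $\ell\in R(F)_1=S_1$; with this choice, injectivity of $\mu_{\ell^d}$ is precisely the Strong Lefschetz type statement for $R(F)$ at the symmetric pair of degrees $(d-3, 2d-3)$ with element $\ell$. I would first verify this on the Fermat curve $F_0 = x_0^d+x_1^d+x_2^d$, where $R(F_0)=S/(x_0^{d-1},x_1^{d-1},x_2^{d-1})$ admits an explicit monomial basis and, for $\ell=x_0+x_1+x_2$, the entries of $\mu_{\ell^d}$ in monomial bases are multinomial coefficients, so one checks directly that the determinant is nonzero (e.g.\ by symmetrization or a character-sum computation). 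The technical material announced in Section~\ref{SEC:NOTATIONS} should then be invoked to extend the property to every smooth $F$: either by establishing a Strong Lefschetz property for Jacobian rings of all smooth plane curves, or by replacing $G=\ell^d$ by a more flexible $F$-uniform construction (for instance, $G$ a suitable symmetric combination of linear forms in general position with respect to $X$).

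The main obstacle is this last passage from a single example to every smooth plane curve. The existence of a Lefschetz pair $(\ell,d)$ is a priori only an open condition in $F$, so plain semicontinuity together with the Fermat case only yields the conclusion on a dense open subset of $|\cO_{\bP^2}(d)|$. Closing the gap demands a structural result on $R(F)$, presumably proved in Section~\ref{SEC:NOTATIONS} in line with the Weak/Strong Lefschetz analysis announced in the abstract; the natural route is to describe the failure locus as the vanishing set of a $\PGL_3$-invariant of $F$ (a Hessian- or determinantal-type invariant built from $\det\mu_{\ell^d}$) and show that this invariant cannot vanish identically on any smooth curve.
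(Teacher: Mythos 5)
There is a genuine gap, and you have in fact named it yourself: everything up to and including the Fermat computation only establishes the statement for a \emph{general} $F$ (openness of the condition ``some $G\in R(F)_d$ multiplies $R(F)_{d-3}$ isomorphically onto $R(F)_{2d-3}$'' plus one witness gives a dense open subset of $|\cO_{\bP^2}(d)|$, which is the content of Proposition \ref{PROP:MAXVARHYPERSUP} in the hypersurface setting), whereas the theorem claims $\delta_M'(\pi)=g$, i.e.\ the property for \emph{every} smooth plane curve. Neither of the two routes you sketch to close this gap is a proof. Establishing a Strong Lefschetz property for the Jacobian ring of an arbitrary smooth plane curve is precisely the open conjecture on SLP for codimension-$3$ Artinian complete intersections recalled in Remark \ref{REM:SUNTOLEF}; the paper explicitly presents Theorem \ref{THM:PLANARCURVESMAXVAR} as \emph{evidence for} that conjecture, not as a consequence of it. The alternative --- exhibiting a $\PGL_3$-invariant of $F$ built from $\det\mu_{\ell^d}$ and showing it is nonvanishing on every smooth curve --- is not constructed, and no mechanism is offered that would prevent such a determinant from vanishing identically on some special smooth orbit. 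So the argument, as written, proves strictly less than the statement.

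For comparison, the paper's proof never exhibits an explicit $G$ and instead argues by contradiction through a dimension count. If no $\eta\in R^d$ multiplies $R^{d-3}$ injectively into $R^{2d-3}$, then Lemma \ref{LEM:kercokerLAST} (a Gorenstein-ring analogue of Lemma \ref{LEM:FULLMAXVAR}: if $\eta$ has maximal rank and $\eta\alpha=0$ then $\alpha^2=0$ in $R$) forces the locus $D_J^{2d-6}$ of decomposable classes $[\alpha\beta]$ lying in $\bP(J^{2d-6})$ to have dimension at least $g-1$ (Proposition \ref{PROP:IMAXINEQ}). On the other hand, Proposition \ref{PROP:JETSINJDEC} bounds $\dim D_J^{2d-6}$ strictly from above by intersecting with the family $E_v^p$ of elements $AF+BF_v$ and using the dual curve together with the Castelnuovo Uniform Position Theorem (monodromy on the $d(d-1)$ points cut by a general $F_v$) to show $D_J^p\cap E_v^p=\emptyset$. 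The two bounds are incompatible for $d\geq 2$. This is the structural input that replaces your missing step; without something of this nature (or a proof of SLP in the relevant degrees), your proposal establishes only the generic case.
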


The techniques involved are a mix of results on Jacobian rings, geometric constructions and  the Castelnuovo Uniform Position Theorem (see Proposition \ref{PROP:JETSINJDEC}), used  to bound the dimension of certain decomposable elements contained in the Jacobian ideal, which could be interesting on its own.
\vspace{2mm}

We notice that our main Theorem is complementary to a result in \cite{FNP}, where the minimum of the variation function for a family of planar curves of degree $d\geq5$ was computed (see Proposition \ref{PROP:MINIMALVAR}).
Finally, in Section \ref{SEC:YC}, we analyze the higher dimentional case: we prove that the general hypersurface of degree $d$ in $\bP^n$ has $I$-maximal variation as hypersurface of $\bP^n$ (see Proposition \ref{PROP:MAXVARHYPERSUP}). 
\vspace{2mm}

When $X$ is a smooth hypersurface in $\bP^n$, the cohomology of $X$ is codified in its jacobian ring $R$, which is an Artinian graded standard algebra. For such algebras there are two properties that have been studied a lot and whose interest is still very appealing nowadays: the Weak and Strong Lefschetz Properties (see Section \ref{SEC:NOTATIONS} for a description of these properties and some related results). 
There is a clear link between SLP and our variation functions: indeed, it is easy to see that if the jacobian ring of an hypersurface $X$ of $\bP^n$ has the SLP, then $X$, as hypersurface in $\bP^n$, has $I$-maximal variation (see Lemma \ref{LEM:SLPMAXVAR}).
\vspace{2mm}

Among the various results about these topics we would like to highlight a result which implies that Fermat hypersurfaces of degree $d$ in $\bP^n$ have jacobian rings which satisfy the SLP. These results can be used to prove also our Proposition \ref{PROP:MAXVARHYPERSUP}. Nevertheless, we decided to leave our proof since in the literature there are several and very different proofs \footnote{The first known proof (\cite{BE}) is of combinatorial nature, whereas Stanley (\cite{Sta}) proposed a geometric, concise and elegant proof which works when $\bK=\bC$. There are also algebraic proofs (like \cite{RRR,Wat} and the recent \cite{Lin}).} of the above result (see \ref{REM:SUNTOLEF} for the precise statement). Our proof relies on a induction argument performed by using partial derivatives and the Euler identity.
\vspace{2mm}

Finally, we would like to remark that our main result, i.e. the fact that planar curves have maximal variation (as planar curves), is not a consequence of any known result regarding either the WLP or the SLP. The SLP is conjectured to hold (see Remark \ref{REM:SUNTOLEF}) for standard graded Artinian algebras of codimension $3$. Hence the main result of this article (Theorem \ref{THM:PLANARCURVESMAXVAR}), gives an evidence for the validity of this important conjecture.


\section{Variation functions}
\label{SEC:VAR}
Let $X$ be a smooth projective curve over $\bC$. Let 
$$\varphi: H^1(T_X)\to \Hom(H^0(\omega_X),H^1(\cO_X))$$ 
be the map induced by the infinitesimal variation of the periods. It is the dual map of the multiplication map $H^0(\omega_C)^{\otimes 2}\to H^0(\omega_C^{\otimes 2})$. We give two numbers to measure the variation on subspaces of $H^1(T_X)$.

\begin{definition} 
\label{DEF:MAXMINVAR}
Let $U\neq \{0\}$ be a vector subspace of $H^1(T_X)$. We set 
$$d_M(U)= \max_{\xi\in U} \dim\varphi(\xi)(H^0(\omega_X))=\max_{\xi\in U} \Rk(\varphi(\xi))$$ and $$d_m(U)= \min_{\xi\in U, u\neq 0} \dim\varphi(\xi)(H^0(\omega_X))=\min_{\xi\in U, u\neq 0} \Rk(\varphi(\xi))$$
and call them variations of $U$. We say that $U$ has {\bf $I$-maximal variation} if $\delta_M(U)=g$ that is there is $\xi\in U$ such that $\varphi(\xi)$ is an isomorphism.
\end{definition}

\begin{remark}
\label{REM:FULLH1}
If $X$ is a smooth projective curve of genus $g\geq 3$ we have $d_m(H^1(T_X))\in \{0,1\}$ with $d_m(H^1(T_X))=1$ if and only if $X$ is not hyperelliptic. If $g\geq 2$ we have $d_m(H^1(T_X))=1$.
\end{remark}

Using a result in \cite[Lemma 2.3]{LP} one can prove that for the very general non-hyperelliptic curve, $H^1(T_X)$ has maximal variation. The following proposition strengthen this result by extending it to any smooth curve.

\begin{lemma}
\label{LEM:FULLMAXVAR}
Let $X$ be a smooth curve of genus $g$. Then $d_M(H^1(T_X))=g$, i.e. $H^1(T_X)$ has $I$-maximal variation.
\end{lemma}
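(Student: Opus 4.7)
The plan is to realise a suitable $\xi\in H^1(T_X)$ as the image, under a connecting homomorphism, of a ``principal part'' supported on a non-special effective divisor $D$ of degree $g$, and then to read off bijectivity of $\varphi(\xi)$ directly from the long exact sequences. First, for $g\geq 2$ the canonical morphism $X\to\bP(H^0(\omega_X)^*)$ has image spanning the target, so one can pick $g$ distinct points $p_1,\dots,p_g\in X$ whose canonical images are linearly independent; equivalently the evaluation map $H^0(\omega_X)\to\bigoplus_i \omega_X|_{p_i}\cong\bC^g$ is an isomorphism, i.e.\ $D=p_1+\cdots+p_g$ is non-special ($h^0(\omega_X-D)=0$). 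For $g=1$ any point works, and the statement is trivial for $g=0$.

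Next, I would tensor the tautological sequence $0\to T_X\to T_X(D)\to T_X(D)|_D\to 0$ with $\omega_X$ via multiplication by a section $\alpha\in H^0(\omega_X)$, using $T_X\otimes\omega_X=\cO_X$. Naturality of the connecting homomorphism then yields a commutative square
\[
\begin{CD}
H^0(T_X(D)|_D) @>{\delta_1}>> H^1(T_X) \\
@VV{\cdot\alpha}V @VV{\cdot\alpha}V \\
H^0(\cO_X(D)|_D) @>{\delta_2}>> H^1(\cO_X).
\end{CD}
\]
Non-speciality of $D$ forces $h^1(\cO_X(D))=0$ and $h^0(\cO_X(D))=1$, the latter being the constant section whose image in $H^0(\cO_X(D)|_D)$ is zero, so $\delta_2$ is an isomorphism. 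An analogous discussion, using $\deg T_X(D)=2-g\leq 0$ and the fact that in the borderline case $g=2$ non-speciality rules out $T_X(D)\cong\cO_X$, gives $H^0(T_X(D))=0$ (or coincides with $H^0(T_X)$ for $g=1$), so $\delta_1$ is injective.

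Finally, under the natural identifications $H^0(T_X(D)|_D)\cong\bigoplus_i T_X|_{p_i}\otimes\cO_X(D)|_{p_i}$ and $H^0(\cO_X(D)|_D)\cong\bigoplus_i\cO_X(D)|_{p_i}$, the left vertical map reads $\alpha\mapsto(a_i\cdot\alpha(p_i))_i$, via the perfect pairing $T_X|_{p_i}\otimes\omega_X|_{p_i}\cong\bC$. Choosing $\xi:=\delta_1(a_1,\dots,a_g)$ with every $a_i\neq 0$, non-speciality of $D$ turns $\alpha\mapsto (a_i\alpha(p_i))_i$ into an isomorphism $H^0(\omega_X)\to\bC^g$, and composing with $\delta_2$ shows that $\varphi(\xi):H^0(\omega_X)\to H^1(\cO_X)$ is an isomorphism, whence $d_M(H^1(T_X))=g$. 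The only step requiring genuine care is the explicit identification of the left vertical map at the stalks over $D$; once that is in place, the argument reduces to standard Riemann--Roch bookkeeping, so I do not anticipate a substantial obstacle.
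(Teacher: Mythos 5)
Your argument is correct, but it takes a genuinely different route from the paper's. You construct the required $\xi$ explicitly as a coboundary of Schiffer type supported on a non-special effective divisor $D=p_1+\cdots+p_g$: the connecting map $\delta_1$ of $0\to T_X\to T_X(D)\to T_X(D)|_D\to 0$ produces $\xi$, and the compatibility of connecting maps with multiplication by $\alpha\in H^0(\omega_X)$ identifies $\varphi(\xi)$ with the composite of the (rescaled) evaluation map $H^0(\omega_X)\to\bigoplus_i\omega_X|_{p_i}$ — an isomorphism precisely because $D$ is non-special — with the isomorphism $\delta_2:H^0(\cO_X(D)|_D)\to H^1(\cO_X)$. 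The Riemann--Roch bookkeeping you invoke is all right: $h^1(\cO_X(D))=0$ and $h^0(\cO_X(D))=1$ with the canonical section vanishing along $D$ make $\delta_2$ bijective, and the borderline cases $g\le 2$ are treated correctly (for $g=2$ non-speciality indeed excludes $T_X(D)\cong\cO_X$). The paper argues instead by contradiction: it takes $\xi$ of maximal rank with a kernel element $\alpha$, perturbs along $\xi+t\xi'$ to extract the first-order relation $\xi'\alpha+\xi\alpha'=0$, deduces $\xi'\alpha^2=0$ for every $\xi'$, and contradicts the duality $H^1(T_X)\cong H^0(\omega_X^{\otimes 2})^*$ since $\alpha^2\neq 0$. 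Your approach buys explicitness and a completely elementary, constructive proof; the paper's approach buys portability, since the same degeneration-plus-duality scheme is repeated essentially verbatim for graded Artinian Gorenstein rings in Lemma \ref{LEM:kercokerLAST}, where multiplication is by ring elements and no analogue of a variation concentrated at a point is available — and it is that transferred version which drives the main theorem on plane curves. Your construction is intrinsically tied to working with all of $H^1(T_X)$ and to the geometry of points on the curve, so it would not adapt to that algebraic setting, but as a proof of this particular lemma it is complete.
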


\begin{proof}
The aim is to prove that there exists $\xi\in H^1(T_X)$ such that the cup product $\xi\cdot : H^0(\omega_X)\to H^1(\cO_X)$ is an isomorphism or, equivalently, injective. Assume, by contradiction, that this is not the case so $\xi\cdot$ is never injective. Take $\xi$ such that $\xi\cdot$ is of maximal rank and let $\alpha\in H^0(\omega_X)\setminus\{0\}$ be such that $\xi\cdot \alpha=0$. 
We will prove that $\xi'\alpha^2=0$ for all $\xi'\in H^1(T_X)$ and then see that this yields a contradiction. 
\vspace{2mm}

Let $\xi'\in H^1(T_X)$. If $\xi'\alpha=0$ then we have also $\xi'\alpha^2=0$ and there is nothing to prove. If, instead, $\xi'\alpha\neq 0$ consider $h_t=\xi+t\xi'$. Since $\xi$ is such that $\varphi(\xi)$ has maximal rank and is not injective we have also that $\varphi(\xi)$ has kernel of minimal dimension.
In particular, $h_t$ has non trivial kernel for all $t$. Let $\gamma$ such that $\gamma(t)\in \Ker(h_t)$ for all $t$ and $\gamma(0)=\alpha$. By considering the expansion of $\gamma(t)$, we can assume that the first term of the expansion of $\gamma$ which is not $0$ has order $k\geq 1$. Then
$\gamma(t)=\alpha+\alpha' t^k+o(k+1)$ and we have
$$0\equiv h_t(\gamma(t))=\xi(\alpha+\alpha't^k+o(k+1))+t\xi'(\alpha+\alpha't^k+o(k+1)) = t\xi'\alpha+t^k\xi\alpha'+o(k+1)$$
which is impossible unless $k=1$ since $\xi'\alpha\neq 0$. Then $k=1$ and we get $\xi'\alpha+\xi\alpha'=0$.
Then, by multiplying by $\alpha$ we have 
$$0=\xi'\alpha^2+\xi\alpha'\alpha=\xi'\alpha^2$$
as claimed. 
\vspace{2mm}

Since $\alpha^2\in H^0(\omega_X^2)$ and $\xi'\in H^1(T_X)=H^0(\omega_X^2)^*$, having $\xi'\alpha^2=0$ for all $\xi'$ is possible only if $\alpha^2=0$. But this is impossible since we have assumed $\alpha\neq 0$. Therefore, there exists $\xi\in H^1(T_X)$ such that $\xi\cdot:H^0(\omega_X)\to H^1(\cO_X)$ is an isomorphism.
\end{proof}

We now specialize the notion of variations just introduced to families of curves. If $\pi:\cX\to B$ is a family of smooth curves of genus $g$ over a smooth base $B$, then for any $b\in B$ we have the Kodaira-Spencer map
$\KS:T_{B,b}\to H^1(T_{X_b})$ where $X_b=\pi^{-1}(b)$.
\vspace{2mm}

We define now the variation function discussed informally in the introduction.

\begin{definition} 
\label{DEF:MAXMINVARFAMIIES}
We set 
$$\delta_M(\pi)=\max_{b\in B} d_M (\KS(T_{B,b}))\qquad\qquad  \delta_M'(\pi)=\min_{b\in B} d_M (\KS(T_{B,b}))$$
$$\delta_m(\pi)=\max_{b\in B} d_m (\KS(T_{B,b}))\qquad\qquad  \delta_m'(\pi)=\min_{b\in B} d_m (\KS(T_{B,b}))$$
and call them variations functions related to $\pi$. We will say that the family has {\bf $I$-maximal variation} if $\delta_M(\pi)=g.$
Given a smooth projective surface $S$ and $X$ is a smooth curve in $|L|$ where $L$ is a line bundle on $S$, we define $\delta_M(L),\delta_M'(L),\delta_m(L)$ and $\delta_m'(L)$ when we consider the family of smooth curves defined by the section of $L$.
\end{definition}

We have the following corollary of Lemma \ref{LEM:FULLMAXVAR}.

\begin{corollary}
For a family $\pi:\cX\to B$ for which the moduli map is dominant (e.g. for a versal family), we have $\delta_M(\pi)=\delta_M'(\pi)=g$ by Lemma \ref{LEM:FULLMAXVAR}. We have $\delta_m(\pi)=\delta_m'(\pi)=1$ with possible exceptions when the family contains an hyperelliptic curves.
\end{corollary}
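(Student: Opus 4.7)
The plan is to deduce the corollary \emph{pointwise} from the two prior results. The hypothesis on $\pi$ is used only to identify $\KS(T_{B,b})$ with $H^1(T_{X_b})$, after which Lemma \ref{LEM:FULLMAXVAR} controls $d_M$ and Remark \ref{REM:FULLH1} controls $d_m$.

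First I would dispose of the versal case, which is the cleanest. By the defining property of versality, $\KS\colon T_{B,b}\to H^1(T_{X_b})$ is surjective at every $b\in B$, so $\KS(T_{B,b})=H^1(T_{X_b})$. Lemma \ref{LEM:FULLMAXVAR} then gives $d_M(\KS(T_{B,b}))=g$ for every $b$; taking max and min over $B$ yields $\delta_M(\pi)=\delta_M'(\pi)=g$ at once. The same identification, combined with Remark \ref{REM:FULLH1}, gives $d_m(\KS(T_{B,b}))=1$ at every $b$ for which $X_b$ is non-hyperelliptic (and also at every $b$ when $g=2$), so $\delta_m(\pi)=\delta_m'(\pi)=1$ unless the family meets the hyperelliptic locus in genus $\geq 3$, which is exactly the stated exception.

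For the weaker hypothesis that the moduli map $\mu\colon B\to M_g$ is only dominant, the same argument still runs at a general $b\in B$: dominance of $\mu$ means its differential is surjective on an open dense subset $U\subseteq B$, and since $d\mu$ factors through $\KS$ via the identification $T_{[X_b]}M_g\cong H^1(T_{X_b})$ (on the stack, or on the coarse space where $X_b$ has no nontrivial automorphisms), one has $\KS(T_{B,b})=H^1(T_{X_b})$ for $b\in U$, yielding $\delta_M(\pi)=g$ and $\delta_m(\pi)=1$. The delicate point --- and the main obstacle I anticipate --- is promoting these to the primed versions $\delta_M'(\pi)=g$ and $\delta_m'(\pi)=1$ at \emph{every} $b\in B$: the maximum rank of the cup product is only lower semicontinuous along $B$, so it can in principle drop on the closed locus where $\KS$ fails to be surjective. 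A careful argument would combine this semicontinuity with an explicit deformation of a full-rank class $\xi_0\in\KS(T_{B,b_0})$ at a general $b_0$ to nearby $b$; for a genuinely versal $\pi$, however, this subtlety evaporates, which is presumably why the corollary singles out versal families as the typical example.
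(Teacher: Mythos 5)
Your argument is essentially the paper's: the corollary carries no separate proof, the intended justification being exactly your pointwise application of Lemma \ref{LEM:FULLMAXVAR} and Remark \ref{REM:FULLH1} once $\KS(T_{B,b})$ is identified with $H^1(T_{X_b})$. The subtlety you flag about $\delta_M'(\pi)$ and $\delta_m'(\pi)$ when the moduli map is merely dominant is real, but it is an imprecision in the paper's statement rather than a defect of your proof --- the argument genuinely requires $\KS$ to be surjective at \emph{every} $b\in B$ (which versality supplies), and your proposed semicontinuity fix would not close it, since $\delta_M'$ is a minimum and lower semicontinuity of $b\mapsto d_M(\KS(T_{B,b}))$ permits exactly the drop you worry about on the closed locus where the differential of the moduli map fails to be surjective.
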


\begin{remark}
\label{REM:KS}
If $S$ is a surface, $L$ is a line bundle and $f\in H^0(L)\setminus\{0\}$ is smooth (i.e. $f$ is such that $X=Z(f)$ is smooth), the family induced by $|L|$ has base $B$ which is an open in $\bP(H^0(\cO_S(X)))$ and $T_{B,X}\simeq H^0(\cO_S(X))/\langle f\rangle$. If we consider the restriction $\rho:H^0(\cO_S(X))\to H^0(\cO_X(X))=H^0(N_{X/S})$ induced by $\cO_S(X)\to \cO_X(X)=N_{X/S}$ and the coboundary map $\partial:H^0(N_{X/S})\to H^1(T_X)$ of the tangent sequence, the Kodaira-Spencer map is induced by the composition $\partial\circ \rho=\KS'$. 
$$\xymatrix{
0 \ar[r] &
  H^0(\cO_S) \ar[r]^-{\cdot f} &
  H^0(\cO_S(X)) \ar[r]^{\rho} \ar[rd]_{\KS'}&
  H^0(N_{X/S}) \ar[r] \ar[d]^{\partial} &
  H^1(\cO_S)\\
 & & & H^1(T_X) 
}$$
We will say that $X$ has {\bf $I$-maximal variation in $S$} if the subspace $U=\KS'(H^0(\cO_S(X)))=\Img(\KS)$ has maximal variation for $X$.
\end{remark}

\begin{proposition}
Let $S$ be a surface and $L$ an ample line bundle with a smooth section. Then $\delta_M(L)\leq g-q(S)$ where $g$ is the genus of the general curve in $|L|$. 
\end{proposition}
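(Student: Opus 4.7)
The plan is to produce a $q(S)$-dimensional subspace $V \subset H^0(\omega_X)$ that lies in the kernel of $\varphi(\xi)$ for every $\xi$ in the image of the Kodaira--Spencer map. Such a subspace immediately forces $\Rk(\varphi(\xi)) \leq g - q(S)$ for all such $\xi$, and therefore $\delta_M(L) \leq g - q(S)$.

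The natural candidate is $V := r(H^0(\Omega^1_S))$, where $r \colon H^0(\Omega^1_S) \to H^0(\omega_X)$ is the restriction obtained by composing
$$H^0(\Omega^1_S) \to H^0(\Omega^1_S|_X) \to H^0(\omega_X),$$
coming from the two short exact sequences $0 \to \Omega^1_S(-X) \to \Omega^1_S \to \Omega^1_S|_X \to 0$ and $0 \to N^*_{X/S} \to \Omega^1_S|_X \to \omega_X \to 0$. To see that $\dim V = q(S)$, I would check that both maps are injective at the level of global sections: the first because $H^0(\Omega^1_S(-X)) = H^0(\Omega^1_S \otimes L^{-1}) = 0$ by Akizuki--Nakano vanishing (for $L$ ample on the surface $S$), and the second because $H^0(N^*_{X/S}) = H^0(\cO_X(-X)) = 0$, using $L^2 > 0$.

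The heart of the matter is to show that $V \subset \Ker(\varphi(\xi))$ for every $\xi \in \Img(\KS)$. By Remark \ref{REM:KS} such a $\xi$ equals $\partial(\sigma)$ for some $\sigma \in H^0(N_{X/S})$. I would then compute $\xi \cdot \alpha$ in \v{C}ech cohomology: on a sufficiently fine open cover $\{U_i\}$ of a neighbourhood of $X$ in $S$, lift $\sigma$ to local sections $\tilde{\sigma}_i \in \Gamma(U_i, T_S|_X)$, so that the cocycle $\xi_{ij} := \tilde{\sigma}_j - \tilde{\sigma}_i$ lands in $T_X$ and represents $\xi$. For $\alpha = r(\hat{\alpha})$ with $\hat{\alpha} \in H^0(\Omega^1_S)$, the compatibility between the pairings $T_X \otimes \omega_X \to \cO_X$ and $T_S|_X \otimes \Omega^1_S|_X \to \cO_X$ gives
$$(\xi \cdot \alpha)_{ij} \;=\; \tilde{\sigma}_j(\hat{\alpha}|_X) - \tilde{\sigma}_i(\hat{\alpha}|_X),$$
which is the coboundary of the global $0$-cochain $\{\tilde{\sigma}_i(\hat{\alpha}|_X)\}_i$, and hence $\xi \cdot \alpha = 0$ in $H^1(\cO_X)$.

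The step I expect to need the most care is the pairing compatibility used in the last display: one must check that $T_X \subset T_S|_X$ annihilates $N^*_{X/S} \subset \Omega^1_S|_X$ under contraction, so that descending $\hat{\alpha}|_X$ to $\alpha \in \omega_X$ does not alter the computation of the cup product. Conceptually, the bound expresses the fact that the $q(S)$-dimensional piece of $H^0(\omega_X)$ pulled back from $S$ is constant under deformations of $X$ inside $|L|$, hence invisible to the IVHS; combining this with Step 1 yields the desired inequality.
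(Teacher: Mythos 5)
Your proof is correct and follows essentially the same route as the paper: both exhibit the $q(S)$-dimensional subspace $r(H^0(\Omega^1_S))\subset H^0(\omega_X)$ (with injectivity from the vanishing of $H^0(\Omega^1_S\otimes L^{-1})$ and $H^0(N^*_{X/S})$) and show it is annihilated by every class in $\Img(\KS)$. The only cosmetic difference is that you verify the vanishing of the cup product by an explicit \v{C}ech coboundary computation with local lifts of $\sigma\in H^0(N_{X/S})$ to $T_S|_X$, whereas the paper factors the same pairing through the connecting map of the twisted conormal sequence $0\to\cO_X\to\Omega^1_S(X)|_X\to\omega_X(X)\to 0$ and invokes exactness; these are two phrasings of the same mechanism.
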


\begin{proof}
Let $Z(f)=X\in |L|$ be a smooth element and fix the notation as in Remark \ref{REM:KS}. By Kodaira vanishing, we have the inclusions $H^0(\Omega_S^1)\hookrightarrow H^0(\Omega_S^1|_X)$ and  $j:H^0(\Omega_S^1|_X)\to H^0(\omega_X)$ (the latter is induced by the cotangent sequence). The composition $\gamma$ of these maps is simply the restrictions $H^0(\Omega_S^1)\to H^0(\Omega_X^1)$ of $1$-forms on $S$ to $X$. Hence, varying $X\in |L|$, we have that $H^0(\omega_X)$ contains a constant part which comes from $H^0(\Omega_S^1)$. We want to show that these $1$-forms annihilate the elements in the image of the Kodaira-Spencer map. Consider $\alpha\in H^0(\Omega_S^1)$ and $\eta\in H^0(\cO_S(X))$. The cup product $\KS'(\eta)\cdot \gamma(\alpha)\in H^1(\cO_X)$ can be computed as follows. First of all, one takes the cup product $\omega\cdot \eta\in H^0(\Omega_S^1(X))$ and restricts it to $X$. Then we have a map sending $(\omega\cdot \eta)|_X$ to $H^1(\cO_X)$ which factors through the exact sequence
$$0\to H^0(\cO_X)\to H^0(\Omega_S^1(X)|_X)\to H^0(\omega_X(X))\to H^1(\cO_X)\to \cdots.$$
Hence, the image of $(\omega\cdot \eta)|_X$ in $H^1(\cO_X)$ is $0$. Then, for all $\eta\in \Img(\KS)$ we have $\gamma(H^0(\Omega_S^1))\subset \Ker(\eta\cdot: H^0(\omega_X)\to H^1(\cO_X))$ as claimed.
\end{proof}

\begin{remark}
Let $f:S\to B$ be a non-isotrivial fibration with $S$ and $B$ a smooth (of dimension $2$ and $1$ respectively) and let $F_b=f^{-1}(b)$ be a smooth fiber with genus $g\geq 2$. If $q_f$ is the relative irregularity of $f$ and $\xi_b\in H^1(T_{F_b})$ is the first order deformation of $F_b$ induced by $f$. Then 
$$q_f\leq g-\rk(\xi\cdot: H^0(\omega_{F_b})\to H^1(\cO_{F_b}))$$
(see, for instance, in \cite{BGN}). Notice that the above inequality can be strict (see, for example, \cite{Fla,GT}).
If $f'=f|_U:U\to B_o$ where $U=f^{-1}(B^o)$ and $B^o\subset B$ is the open which parametrizes smooth fibers, the above inequality implies $q_f\leq g-d_M'(f')=g-d_m'(f')$.
\end{remark}

Now we will concentrate on families of plane curves. First of all we will reinterpret a result of the authors (see \cite{FNP}) in the framework of the variation functions.

\begin{proposition}
\label{PROP:MINIMALVAR}
For all $d\geq 5$ we have $\delta_m'(\cO_{\bP^2}(d))=d-3$ and $\delta_m(\cO_{\bP^2}(d))\geq d-3$.
\end{proposition}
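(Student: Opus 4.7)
My plan is to translate the variational quantity into the Jacobian-ring language and then invoke the main estimate of \cite{FNP}. Fix a smooth $f\in H^0(\cO_{\bP^2}(d))$ and write $X=Z(f)$, $R=\bC[x_0,x_1,x_2]/J_f$ for the Jacobian ring, and $g=(d-1)(d-2)/2$ for the genus. The image of the Kodaira-Spencer map $\KS'$ of Remark \ref{REM:KS} is canonically isomorphic to $R_d$, and under Griffiths' Poincar\'e-residue identifications $H^0(\omega_X)\simeq R_{d-3}$ and $H^1(\cO_X)\simeq R_{2d-3}$ the cup-product $\xi\cdot$ is identified with the multiplication $\cdot\eta : R_{d-3}\to R_{2d-3}$, where $\eta\in R_d$ represents $\xi$. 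Both graded pieces have dimension $g$ by the Gorenstein symmetry of $R$ around socle degree $3d-6$. The proposition is thus equivalent to the purely algebraic claim $\min_{0\neq\eta\in R_d}\rk(\cdot\eta)=d-3$ for every smooth $f$, together with realizing this minimum at some smooth $f$.

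For the lower bound I would appeal directly to the main result of \cite{FNP}, which establishes precisely that for any smooth plane curve of degree $d\geq 5$ and any nonzero $\eta\in R_d$ the rank of $\cdot\eta : R_{d-3}\to R_{2d-3}$ is at least $d-3$. Applied at every smooth $X\in|\cO_{\bP^2}(d)|$, this gives $d_m(\KS(T_{B,X}))\geq d-3$, and taking the minimum over $B$ yields both $\delta_m'(\cO_{\bP^2}(d))\geq d-3$ and $\delta_m(\cO_{\bP^2}(d))\geq d-3$ at once (the latter by the trivial inequality $\delta_m\geq \delta_m'$, or directly from the same pointwise bound).

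For the matching upper bound $\delta_m'(\cO_{\bP^2}(d))\leq d-3$, the plan is to produce, for a well-chosen smooth $X$, a nonzero $\eta\in R_d$ realizing rank exactly $d-3$. The natural candidate, again from \cite{FNP}, is an $\eta$ adapted to a pencil of lines through a smooth point of $X$ (for instance of the form $\eta=l^{d-1}m$ with $l,m$ linear forms in sufficiently general position with respect to $f$), for which many adjoint curves of degree $d-3$ are annihilated modulo $J_f$; a direct dimension count in $R$ then shows the kernel of $\cdot\eta$ has the predicted dimension $g-(d-3)$, making the rank exactly $d-3$. The hard part is of course the lower bound of \cite{FNP}: its proof combines the Gorenstein symmetry of $R$, the regular-sequence structure of $\partial_0 f,\partial_1 f,\partial_2 f$, and a Castelnuovo-type uniform-position analysis of the zero-dimensional Jacobian scheme $Z(\partial_0 f,\partial_1 f,\partial_2 f)$ (a line of argument closely parallel to the techniques developed in Proposition \ref{PROP:JETSINJDEC} of the present paper). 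Once that input is available, the present proposition is the routine reinterpretation sketched above.
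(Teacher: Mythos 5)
Your proposal follows essentially the same route as the paper: both parts reduce the lower bound $\delta_m,\delta_m'\geq d-3$ to the pointwise rank estimate of \cite[Theorem 1.3]{FNP} for nonzero elements of $\Img(\KS)\simeq R^d$, and both obtain $\delta_m'(\cO_{\bP^2}(d))\leq d-3$ by exhibiting one smooth curve carrying a deformation of rank exactly $d-3$. The only divergence is in the witness for sharpness: the paper takes the Fermat curve, where the Jacobian ideal is monomial and a class such as $x_0^{d-2}x_1^2$ is immediately seen to have rank $d-3$ by counting surviving monomials, whereas your candidate $\eta=l^{d-1}m$ on a ``well-chosen'' curve still requires the kernel-dimension count you defer (and note that for the Fermat curve itself $l^{d-1}m$ with $l$ a coordinate form lies in the Jacobian ideal, so the choice of curve and forms does matter).
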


\begin{proof}
Consider a smooth curve of degree $d\geq 5$ on $S=\bP^2$ and let $\pi:\cC\to B$ be the family of smooth plane curves of degree $d$. 
In \cite[Theorem 1.3]{FNP}, it is proved that the rank of the cup product by $\xi\in \Img(\KS)$ is at least $d-3$, unless $\xi=0$. Then $\delta_m(\cO_{\bP^2}(d))\geq d-3$ and $\delta_m'(\cO_{\bP^2}(d))=d-3$ since for Fermat curve of degree $d$ one can easily write an infinitesimal deformation with rank $d-3$.
\end{proof}

One of the mail result of this paper is that the family of curves in $\bP^2$ has $I$-maximal variation (see Theorem \ref{THM:PLANARCURVESMAXVAR}). This will be stated and proved in Section \ref{SEC:VARPLA}. 
\vspace{2mm}

We conclude this section by giving one of the possible generalization (perhaps, the more extreme) of our definition of variations to the case of higher dimensional varieties.

\begin{definition} 
Let $X$ be a smooth complete projective variety of dimension $n$. Let 
$$\varphi:Sym^n(H^1(T_X))\to \Hom(H^0(\omega_X)\to H^n(\cO_X))$$
the Yukawa coupling mapping (\cite[Construction 1, pag. 53]{GrT}), i.e. the map sending $\bigotimes_{i=1}^{n}\xi_i$ to the cup product by $\Pi_{i=1}^{n}\xi_i$.
For any $\xi\in H^1(T_X)$ write
$\varphi(\xi^{\otimes n})=\xi^n\cdot: H^0(\omega_X)\to H^n(\cO_X)$. We say then that $\xi$ has {\bf $I$-maximal variation} if $\xi^n$ induces an isomorphism.
We set, for $U\subset H^1(T_X)$
$$\delta_M(U)= \max_{\xi\in U} \dim\xi^n\cdot(H^0(\omega_X))\qquad \mbox{ and }\qquad \delta_m(U)= \min_{\xi\in U, \xi\neq 0} \dim\xi^n\cdot(H^0(\omega_X)).$$
Accordingly, we can define the numbers $\delta_M,\delta_M',\delta_m$ and $\delta_m'$ associated to a families of variety and of line bundles.
\end{definition}


\section{Gorenstein Rings an Lefschetz properties}
\label{SEC:NOTATIONS}

In this section we fix some notations, recall some well known facts and prove two lemmas. Good references for Jacobian rings, their relation to cohomology of hypersurfaces and the IVHS of the latter are the books \cite{textVoi1,textVoi2,bookLef,BH} or the original works \cite{Gri_I,Gri_II,IVHS_I,IVHS_II,IVHS_III}.
\vspace{2mm}

\begin{definition}
\label{DEF:GOR}
Let $\bK$ be a field and consider a standard\footnote{This means that $R$ is generated, as $\bK$-algebra, by $R^1$, i.e. the vector space of elements of $R$ of degree $1$.} graded Artinian $\bK$-algebra $R=\bigoplus R^{s}$ of finite dimension. Then $R$ is a {\bf Gorenstein ring} if the following hold:
\begin{itemize}
    \item $R^s=0$ if $s>N$ or $s<0$ and $R^0\simeq R^N\simeq \bK$ as vector spaces;
    \item the multiplication map $R^{a}\times R^{N-a}\to R^N$ is a perfect pairing for all $0\leq a\leq N$.
\end{itemize}
The graded piece of degree $N$, namely $R^N$, is called {\bf socle} of $R$ whereas we refer to the second property as ``Gorenstein duality'' (since it induces isomorphisms $R^{N-k}\simeq (R^{k})^*$) for brevity.
\end{definition}

Notice that, although the definition of Gorenstein ring is more general, in this paper we will only encounter Gorenstein rings which are like in definition \ref{DEF:GOR}. Hence, we will simply say that a ring is Gorenstein for brevity. The main examples that we will use are Gorenstein rings as the following.  

\begin{example}
\label{EX:hypers}
If $X=V(F)$ is a smooth hypersurface in $\bP^n$ of degree $d$ and $J=(F_{x_0},\dots,F_{x_n})$ is the Jacobian ideal associated to $F$, we have that $R=\bK[x_0,\dots,x_n]/J$ is a Gorenstein ring with socle in degree $N=(n+1)(d-2)$. It is called the {\bf Jacobian ring} associated to $X$.
\end{example}

Let $R=\oplus R^s$ be a Gorenstein ring with socle in degree $N$. Take $\alpha\in R^e\setminus\{0\}$ and consider the multiplication map $\mu:R \stackrel{\cdot\alpha}\to R$. 
Since $\alpha\in R^e$, we have that 
$$\mu_s(\alpha):R^s \stackrel{\cdot\alpha}\to R^{s+e}$$
is a graded morphism and we can set $K_{s}(\alpha)=\ker(\alpha\cdot :R^s\to R^{s+e})$. 
The quotient ring $R_{\alpha}=R/(0:\alpha)$ has then a natural graded structure with $(R_{\alpha})^s=R^s/K_{s}(\alpha)$. Denote by $r_j$ and $k_j(\alpha)$ (or simply $k_j$, if no confusion arises) the dimension of $R^j$ and of $K_{\alpha}^j$ respectively.

\begin{lemma} 
\label{gring}
Let $R=\oplus R^s$ be a Gorenstein ring with socle in degree $N$. For all $\alpha\in R^e\setminus\{0\}$ we have that $R_{\alpha}=R/(0:\alpha)$ is a Gorenstein ring with socle in degree $N_{\alpha}=N-e$. Moreover
$$r_{N-e-s}-k_{N-e-s}=r_s-k_s$$
for all $s$ with $0\leq s\leq N-e$.
\end{lemma}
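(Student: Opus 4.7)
The plan is to exploit the Gorenstein duality of $R$ itself to transfer a duality onto $R_{\alpha}$, using multiplication by $\alpha$ as a degree-shift isomorphism onto a one-dimensional socle.

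First I would pin down the socle of $R_{\alpha}$. For $s>N-e$ one has $s+e>N$, so $R^{s+e}=0$ forces $K_s(\alpha)=R^s$ and hence $(R_{\alpha})^s=0$. For $s=N-e$, the map $\alpha\cdot:R^{N-e}\to R^N$ lands in a $1$-dimensional space; if its image were $0$ then $\alpha$ would pair trivially with all of $R^{N-e}$ under the perfect pairing $R^e\times R^{N-e}\to R^N$, contradicting $\alpha\neq 0$. Therefore $\alpha R^{N-e}=R^N$ and $\dim(R_{\alpha})^{N-e}=1$. By construction $R_{\alpha}=R/(0:\alpha)$ so multiplication by $\alpha$ descends to an injective graded map $\alpha\cdot:R_{\alpha}\hookrightarrow R$ of degree $e$; in particular it yields an isomorphism $(R_{\alpha})^{N-e}\xrightarrow{\sim} R^N$.

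Next I would establish the perfect pairing
\[
(R_{\alpha})^s\times (R_{\alpha})^{N-e-s}\longrightarrow (R_{\alpha})^{N-e},\qquad (\bar{\beta},\bar{\gamma})\longmapsto \overline{\beta\gamma}.
\]
Composing with the isomorphism $\alpha\cdot:(R_{\alpha})^{N-e}\xrightarrow{\sim} R^N$, this pairing becomes $(\bar{\beta},\bar{\gamma})\mapsto \alpha\beta\gamma\in R^N$. Suppose $\bar{\beta}\in(R_{\alpha})^s$ pairs trivially with every $\bar{\gamma}\in(R_{\alpha})^{N-e-s}$. Since the quotient map $R^{N-e-s}\twoheadrightarrow (R_{\alpha})^{N-e-s}$ is surjective, this means $(\alpha\beta)\cdot\gamma=0$ for every $\gamma\in R^{N-e-s}$. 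But $\alpha\beta\in R^{s+e}$ and $(s+e)+(N-e-s)=N$, so the Gorenstein duality of $R$ forces $\alpha\beta=0$, i.e.\ $\bar{\beta}=0$ in $(R_{\alpha})^s$. By symmetry the pairing is non-degenerate on both sides, hence perfect. This shows $R_{\alpha}$ is Gorenstein with socle in degree $N-e$.

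Finally, the dimension identity is an immediate consequence: the perfect pairing gives $\dim(R_{\alpha})^s=\dim(R_{\alpha})^{N-e-s}$ for all $0\leq s\leq N-e$, which is exactly $r_s-k_s=r_{N-e-s}-k_{N-e-s}$. The only genuinely delicate point is verifying that the image of $\alpha\cdot:R^{N-e}\to R^N$ is nonzero; everything else is a formal transport of the duality from $R$ to $R_{\alpha}$ via the injection $\alpha\cdot:R_{\alpha}\hookrightarrow R$.
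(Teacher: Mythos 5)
Your proposal is correct and follows essentially the same route as the paper: both identify the one-dimensional socle $(R_{\alpha})^{N-e}$ via surjectivity of $\alpha\cdot:R^{N-e}\to R^N$ (forced by $\alpha\neq 0$ and the perfect pairing $R^e\times R^{N-e}\to R^N$), and both deduce non-degeneracy of the pairing on $R_{\alpha}$ by transporting it through multiplication by $\alpha$ to the Gorenstein pairing $R^{s+e}\times R^{N-e-s}\to R^N$. The paper phrases this as producing a $\gamma$ with $\alpha\beta\gamma\neq 0$ while you argue the contrapositive, but the content is identical.
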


\begin{proof}
First of all, notice that $R_{\alpha}^{N-e}$ is one dimensional. Indeed, the map $\alpha\cdot R^{N-e}\to R^N$ is surjective (by Gorenstein duality in $R$, as $\alpha\neq 0$). Then $k_{N-e}=r_{N-e}-1$ and $\dim(R_{\alpha}^{N-e})=r_{N-e}-k_{N-e}=1$ as claimed. Now we show that Gorenstein duality holds for $R_{\alpha}$. 
Let $[\beta]\in R_{\alpha}^{s}\setminus\{[0]\}$ with $0\leq s\leq N_{\alpha}=N-e$. Since $[\beta]\neq [0]$ we have that $\alpha\beta\neq 0$ in $R^{s+e}$. 
Then, by Gorenstein duality on $R$, there exists $\gamma\in R^{N-s-e}$ such that $\alpha\beta\cdot \gamma\neq 0$ in $R^{N}$. In particular, $(\beta\gamma)\cdot \alpha \neq 0$ so $\beta\gamma\not \in K_{N-e}(\alpha)$, i.e. $[\beta\gamma]\neq 0$ in $R_{\alpha}^{N_{\alpha}}$ and Gorenstein duality holds for $R_{\alpha}$ as claimed.
The relation among $r_j$ and $k_j$ simply follows from $\dim(R_{\alpha}^s)=\dim(R_{\alpha}^{N_{\alpha}-s})$.
\end{proof}

Using Gorenstein duality instead of the duality between $H^1(T_X)$ and $H^0(\omega_X^{2})$ we are able to prove the following lemma which is an analogue of Lemma \ref{LEM:FULLMAXVAR}.

\begin{lemma}
\label{LEM:kercokerLAST}
Let $R=\bigoplus_{k=0}^{N}$ be a Gorenstein ring with socle in degree $N$ and fix $0\leq d,e,\leq N$ with $d+2e\leq N$. 
Then, for all $\eta\in R^d$ with $\eta\cdot:R^{e}\to R^{d+e}$ of maximal rank and for all $\alpha\in K_s(\eta)$ we have $\alpha^2=0$.
\end{lemma}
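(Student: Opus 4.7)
The plan is to follow closely the proof of Lemma \ref{LEM:FULLMAXVAR}, with Gorenstein duality (together with the standard graded hypothesis) replacing the duality between $H^1(T_X)$ and $H^0(\omega_X^{\otimes 2})$ used there. Fix $\eta \in R^d$ with $\eta\cdot : R^e \to R^{d+e}$ of maximal rank, and $\alpha \in K_e(\eta)$, so $\eta\alpha = 0$ in $R^{d+e}$. I will first show that $\xi'\alpha^2 = 0$ in $R^{d+2e}$ for every $\xi' \in R^d$, and then deduce $\alpha^2 = 0$ from the perfect pairing $R^{2e}\times R^{N-2e}\to R^N$.

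For the first step, fix $\xi' \in R^d$; the case $\xi'\alpha = 0$ is trivial, so assume $\xi'\alpha \neq 0$. The locus in $R^d$ where multiplication into $R^{d+e}$ attains the maximal rank is open and contains $\eta$, so the kernels form a vector subbundle of the trivial bundle with fiber $R^e$ on a neighborhood of $\eta$. In particular, along the pencil $h_t = \eta + t\xi'$ one can lift $\alpha$ to a curve $\gamma(t) \in \ker(h_t\cdot)$ with $\gamma(0) = \alpha$ (in purely algebraic terms this is because the kernel of $\eta + t\xi'$, as a saturated submodule of the free $\bK[[t]]$-module $R^e\otimes\bK[[t]]$, is a direct summand whose reduction mod $t$ surjects onto $K_e(\eta)$, since the generic rank of $h_t$ equals $\mathrm{rk}(\eta\cdot)$ by maximality). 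Writing $\gamma(t) = \alpha + t^k\alpha' + O(t^{k+1})$ with $\alpha' \neq 0$ and substituting into $h_t\gamma(t) \equiv 0$ forces $k = 1$, since otherwise the coefficient of $t$ would be $\xi'\alpha \neq 0$; hence $\xi'\alpha + \eta\alpha' = 0$. Multiplying by $\alpha$ and using $\eta\alpha = 0$ gives $\xi'\alpha^2 = 0$.

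For the concluding step, the inequality $d + 2e \leq N$ ensures $N - d - 2e \geq 0$, and the standard graded hypothesis means every monomial in $R^{N-2e}$ factors as a monomial of degree $d$ times one of degree $N-d-2e$; therefore the multiplication $R^d \otimes R^{N-d-2e} \to R^{N-2e}$ is surjective. Writing any $\beta \in R^{N-2e}$ as $\beta = \sum_i \xi'_i \gamma_i$ with $\xi'_i \in R^d$, one computes $\alpha^2\beta = \sum_i(\xi'_i\alpha^2)\gamma_i = 0$ in $R^N$, and Gorenstein duality then yields $\alpha^2 = 0$.

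The main obstacle is the deformation step, i.e.\ producing the smooth lift $\gamma(t)$ of $\alpha$ through the kernels of $h_t\cdot$; everything else is formal once we have access to Gorenstein duality and the reduction to multiplication by degree-$d$ elements supplied by standard gradedness.
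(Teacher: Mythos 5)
Your proposal is correct and follows essentially the same route as the paper: reduce to showing $\xi'\alpha^2=0$ for all $\xi'\in R^d$ via the curve $\gamma(t)$ in the kernels of $h_t=\eta+t\xi'$ (exactly as in Lemma \ref{LEM:FULLMAXVAR}), then conclude by non-degeneracy of $R^d\times R^{2e}\to R^{d+2e}$. You actually spell out two points the paper leaves implicit — the constant-rank/vector-bundle justification for the existence of $\gamma(t)$, and the use of standard gradedness to reduce that non-degeneracy to the Gorenstein pairing $R^{2e}\times R^{N-2e}\to R^N$ — which is where the hypothesis $d+2e\leq N$ enters.
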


\begin{proof}
The proof is similar to the one of Lemma \ref{LEM:FULLMAXVAR}. Let $\eta\in R^{d}$ of maximal rank. If $\eta$ is injective the thesis holds trivially so we can assume that $\ker(\mu_e(\eta))=K_{e}(\eta)\neq \{0\}$ and $d\neq 0,N$. Let $\alpha\in K_{e}(\eta)\setminus\{0\}$ and consider $\eta'\in R^d$. If $\eta'\alpha=0$ then we have also $\eta'\alpha^2=0$. We want to show that $\eta'\alpha^2=0$ also if $\eta'\alpha\neq 0$. As in Lemma \ref{LEM:FULLMAXVAR}, one has can find a curve $\gamma$ such that $\gamma(0)=\alpha$ and $\gamma(t)\in K_e((\eta+t\eta')\cdot)\setminus\{0\}$. Then we have
$\eta \alpha'+\eta' \alpha=0$ and then $\eta'\alpha^2=0$. Then, by multiplying by $\alpha$ we would have 
$$0=\eta'\alpha^2+\eta\alpha'\alpha=\eta'\alpha^2$$
since $\eta\alpha=0$. Hence, as claimed, $\eta'\alpha^2=0$ for all $\eta'\in R^d$ and this proves that $\alpha^2=0$ by the non-degeneracy of the product $R^d\times R^{2e}\to R^{2d+e}$ (which follows by the Gorenstein duality $R^d\times R^{N-d}\to R^{N}$).
\end{proof}

We conclude the section by recalling the definition of weak and strong Lefschetz property. The reader can refer to \cite{bookLef} for a comprehensive text about the Lefschetz properties.

\begin{definition}
Let $\bK$ be a field and consider a standard Artinian graded $\bK$-algebra $R=\bigoplus_{s=0}^{N} R^k$. We say that $R$ satisfy the 
\begin{itemize}
    \item {\bf Weak Lefschetz Property} (WLP) if there exists $L\in R^1$ such that $L\cdot :R^k\to R^{k+1}$ ha maximal rank for all $k$;
    \item {\bf Strong Lefschetz Property} (SLP) if there exists $L\in R^1$ such that $L^d\cdot :R^k\to R^{k+d}$ has maximal rank for all $k,d$.
\end{itemize}
The {\bf codimension} of $R$ is, by definition, the number of generators of $R^1$ as $\bK$-vector space.
\end{definition}

We summarize here some results and conjecture relevant with respect to the topics of our article.

\begin{remark}
\label{REM:SUNTOLEF}
Le $R$ be a standard Artinian graded $\bK$-algebra. Then:
\begin{itemize}
    \item if $R$ has codimension $2$ or less, then $R$ satisfies the SLP;
    \item if $R$ has codimension $3$ and is a complete intersection ring, then $R$ satisfy the WLP;
    \item No examples of complete intersection ring with codimension $3$ which does not satisfy either WLP or SLP are known;
    \item if $R$ is a complete intersection ring, then it is conjectured that $R$ satisfy also SLP;
    \item Theorem: let $\bK$ be a field of characteristic zero. Then $R=\bK[x_0,\dots,x_n]/(x_0^{a_0},\dots, x_n^{a_n})$ satisfy the SLP. 
\end{itemize}
As specified in the introduction, several proof of the latter statement exist in literature. Up to our knowledge, the first proof of the Theorem can be found in \cite{BE}.
\end{remark}

\begin{remark}
\label{REM:SLP}
If $X$ is a smooth hypersurface in $\bP^n$ and $R$ is its Jacobian ring, then $R$ is a Gorenstein ring with socle in degree $N=(d-1)(n+1)$ of codimension $n+1$ which is also a complete intersection ring (since it is a regular ring). In particular, it is conjectured (see Remark \ref{REM:SUNTOLEF}) that $R$ satisfy SLP. 
\end{remark}

The following lemma state the link between the variation functions for hypersurfaces in $\bP^n$ and the SLP.

\begin{lemma}
\label{LEM:SLPMAXVAR}
Let $X$ be a smooth hypersurface of $\bP^n$ and let $R$ be its Jacobian ring. If $R$ has the SLP then $X$ has $I$-maximal variation as hypersurface of $\bP^n$.
\end{lemma}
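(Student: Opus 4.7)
The plan is to translate the statement into the language of the Jacobian ring $R$ via Griffiths' theorem. For a smooth hypersurface $X=V(F)$ of degree $d$ in $\bP^n$, Griffiths' identifications realize $H^{n-1,0}(X)\simeq R^{d-n-1}$ and $H^{0,n-1}(X)\simeq R^{nd-n-1}$ (these bidegrees lie off the diagonal, so by the Lefschetz decomposition they coincide with their primitive parts), while the Kodaira--Spencer image of the family of smooth hypersurfaces of degree $d$ in $\bP^n$ is naturally identified with $R^d$ sitting inside $H^1(T_X)$. Under these identifications, the cup product by $\xi\in R^d$ corresponds to multiplication by $\xi$ in $R$; hence the Yukawa coupling $\xi^{n-1}\cdot:H^0(\omega_X)\to H^{n-1}(\cO_X)$ becomes the multiplication map $R^{d-n-1}\to R^{nd-n-1}$ given by multiplication by $\xi^{n-1}$.

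With this dictionary in hand, the task reduces to exhibiting $\xi\in R^d$ for which this multiplication is an isomorphism. Assuming $R$ has the SLP, I pick a Lefschetz element $L\in R^1$ and set $\xi:=L^d$. The element $\xi$ is nonzero by the SLP applied to $L^d:R^0\to R^d$, so long as $R^d\neq 0$; otherwise the statement is vacuous, since then already $H^{n-1,0}(X)=0$. Then $\xi^{n-1}=L^{(n-1)d}$, and the relevant map becomes $L^{(n-1)d}\cdot:R^{d-n-1}\to R^{nd-n-1}$, which by the SLP has maximal rank.

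To finish, I observe that this map is actually an isomorphism. Recall from Example \ref{EX:hypers} that $R$ is Gorenstein with socle in degree $N=(n+1)(d-2)$. Since
\[
(d-n-1)+(nd-n-1)=(n+1)(d-2)=N,
\]
Gorenstein duality gives $\dim R^{d-n-1}=\dim R^{nd-n-1}$. A map of maximal rank between vector spaces of equal dimension is necessarily an isomorphism, so $\xi^{n-1}\cdot$ is indeed an isomorphism, which is precisely the condition for $X$ to have $I$-maximal variation as a hypersurface of $\bP^n$.

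The only delicate point is the translation between the cohomological formulation of $I$-maximal variation (cup product on $H^{\ast}$) and the purely algebraic multiplication in $R$: one must check that the bidegrees in question equal their primitive counterparts, so that Griffiths' isomorphisms apply verbatim, and that the image of Kodaira--Spencer exhausts the entire degree-$d$ piece $R^d$. Both facts are standard for smooth hypersurfaces in $\bP^n$, after which the argument reduces, essentially tautologically, to the combination of the SLP assumption and Gorenstein duality.
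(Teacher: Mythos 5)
Your proof is correct and follows essentially the same route as the paper's: pick the Lefschetz element $L$, set $\xi=L^d$, use the Griffiths identifications to turn the Yukawa coupling into the multiplication map $L^{(n-1)d}\cdot:R^{d-n-1}\to R^{nd-n-1}$, which has maximal rank by the SLP and is then an isomorphism because Gorenstein duality makes the source and target of equal dimension. Your bookkeeping is in fact more careful than the paper's, whose proof contains typos precisely in the exponent ($d(n+1)$ versus $d(n-1)$) and in the target degree (written $N-d-1$ instead of $N-(d-n-1)=nd-n-1$).
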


\begin{proof}
If $R$ satisfy the SLP then there exists $L\in R^1=H^0(\cO_{\bP^n}(1))$ such that $L^{d(n+1)}\cdot :R^{d-n-1}\to R^{N-d-1}$ is an isomorphism. Since $R^{d-n-1}=H^0(\cO_{\bP^n}(d-n-1))\simeq H^0(\omega_X)$ by adjunction and by the results of Griffiths, Green and Donagi about the cohomology of hypersurfaces of $\bP^n$ and the IVHS of the latter, we have that the above multiplication map is simply the map $\varphi(L^{\otimes d(n-1)})=L^{\otimes d(n-1)}\cdot :H^0(\omega_X)\to H^{n-1}(\cO_X)$. Then, we have that $X$, as hypersurface, has $I$-maximal variation.
\end{proof}

In particular, by Remark \ref{REM:SLP}, it is conjectured that hypersurfaces in $\bP^n$ should have $I$-maximal variation as hypersurfaces.
The main result of this article prove this conjecture for the case $n=2$, i.e. for plane curves (see Theorem \ref{THM:PLANARCURVESMAXVAR}).


\section{Plane curves}
\label{SEC:VARPLA}

In this section we prove the main theorem of the article:

\begin{theorem}
\label{THM:PLANARCURVESMAXVAR}
Smooth planar curves of degree $d\geq 3$ have $I$-maximal variation as planar curves. More precisely, $\delta_M(\cO_{\PP^{2}}(d))=\delta_M'(\cO_{\PP^{2}}(d))=\frac{(d-1)(d-2)}{2}$ for $d\geq 3$.
\end{theorem}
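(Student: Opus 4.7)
The natural setting is the Jacobian ring of $X=V(F)$. Write $R=\mathbb{C}[x_{0},x_{1},x_{2}]/J$, a Gorenstein ring with socle in degree $N=3(d-2)$. The Griffiths--Green--Donagi identifications (Example \ref{EX:hypers} and the discussion of Section \ref{SEC:NOTATIONS}) give $H^{0}(\omega_{X})\cong R^{d-3}$, $H^{1}(\mathcal{O}_{X})\cong R^{2d-3}$, and, for the family induced by $|\mathcal{O}_{\mathbb{P}^{2}}(d)|$ at $X$, they identify the image of the Kodaira--Spencer map with $R^{d}\subset H^{1}(T_{X})$; under these identifications the cup product by $\eta\in R^{d}$ corresponds to the multiplication $\mu_{\eta}\colon R^{d-3}\to R^{2d-3}$ in $R$. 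Both spaces have dimension $g=(d-1)(d-2)/2$ by Gorenstein duality, since $(d-3)+(2d-3)=N$. The theorem therefore reduces to the statement: for every smooth $X$, some $\eta\in R^{d}$ makes $\mu_{\eta}$ an isomorphism; equivalently one must produce $\eta\in R^{d}$ with $\ker\mu_{\eta}=0$.

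\textbf{Maximal-rank selection and application of Lemma \ref{LEM:kercokerLAST}.} The rank of $\mu_{\eta}$ is a lower semi-continuous function of $\eta\in R^{d}$, so max-rank elements form a Zariski open subset and in particular exist. Pick such an $\eta$. With $\deg\eta=d$ and $e=d-3$ one has $\deg\eta+2e=3d-6=N$, which is precisely the hypothesis (with equality) of Lemma \ref{LEM:kercokerLAST}. The lemma gives $\alpha^{2}=0$ in $R^{2d-6}$ for every $\alpha$ in $W:=\ker\mu_{\eta}\subset R^{d-3}$. Polarizing the identity $(\alpha_{1}+\alpha_{2})^{2}=0$ yields $\alpha_{1}\alpha_{2}=0$ in $R$ for all $\alpha_{1},\alpha_{2}\in W$. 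Since $J$ is generated in degree $d-1>d-3$, we have $R^{d-3}=\mathbb{C}[x_{0},x_{1},x_{2}]_{d-3}$, and the conclusion becomes: the image of the bilinear map $W\otimes W\to \mathbb{C}[x_{0},x_{1},x_{2}]_{2d-6}$ is contained in the Jacobian ideal $J_{2d-6}$. The whole argument is then reduced to showing that such a subspace $W$ must be zero.

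\textbf{Bounding decomposable elements in $J$, and conclusion.} This last vanishing is precisely the content of Proposition \ref{PROP:JETSINJDEC}, which is the main new geometric input. The idea is to restrict to a general line $\ell\subset\mathbb{P}^{2}$: each $\alpha\in W$ becomes a degree-$(d-3)$ form on $\ell$, and the condition $\alpha^{2}\in J_{2d-6}$ translates, via the presentation $\alpha^{2}=A F_{x_{0}}+BF_{x_{1}}+CF_{x_{2}}$ and the Euler identity on $X$, into a very restrictive condition on the zero divisor of $\alpha|_{\ell}$ relative to the $d$ points of $X\cap\ell$. As $\ell$ moves in the dual plane, the Castelnuovo Uniform Position Theorem says that the monodromy group on $X\cap\ell$ is the full symmetric group on $d$ letters, and this symmetry is incompatible with a nonzero family of such $\alpha$'s: the resulting constraints force $W=0$. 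This monodromy--uniform-position step is the main obstacle of the proof and the reason the result is delicate. Once $W=0$ is established, the chosen maximal-rank $\mu_{\eta}$ has trivial kernel, hence is an isomorphism of $g$-dimensional spaces. Thus $d_{M}(R^{d})=g$ at every smooth $X\in|\mathcal{O}_{\mathbb{P}^{2}}(d)|$; combined with the trivial bound $\delta_{M}\leq g$ coming from Lemma \ref{LEM:FULLMAXVAR}, this proves $\delta_{M}(\mathcal{O}_{\mathbb{P}^{2}}(d))=\delta_{M}'(\mathcal{O}_{\mathbb{P}^{2}}(d))=(d-1)(d-2)/2$.
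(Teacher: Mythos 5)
Your setup (the Jacobian-ring translation, the choice of a maximal-rank $\eta\in R^{d}$, and the application of Lemma \ref{LEM:kercokerLAST}, polarized to give $W\cdot W\subseteq J^{2d-6}$ for $W=\ker\mu_{\eta}$) agrees with the paper up to that point, but the final step contains a genuine gap. You assert that ``such a subspace $W$ must be zero'' and that this ``is precisely the content of Proposition \ref{PROP:JETSINJDEC}''. It is not. Proposition \ref{PROP:JETSINJDEC} does not show that the locus $D_J^{p}$ of decomposable elements of $\bP(J^{p})$ is empty; it only shows that $D_J^{p}$ is disjoint from the auxiliary locus $E_v^{p}$ for general $v$, which yields an \emph{upper bound} on $\dim D_J^{p}$. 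In fact $D_J^{2d-4}$ is nonempty of its expected dimension (Corollary \ref{COR:EXPDIMENSION}), so the existence of a nonzero $\alpha$ with $\alpha^{2}\in J$ is in no way excluded, and nothing in the paper states that a linear subspace $W$ with $W\cdot W\subseteq J$ must vanish. (Your sketch of the proof of Proposition \ref{PROP:JETSINJDEC} is also off target: the Uniform Position/monodromy argument is applied to the $d(d-1)$ points cut on $X$ by the polar $F_v=0$, i.e.\ the preimage of a general line section of the dual curve, not to the $d$ points of $X\cap\ell$ for a line $\ell\subset\bP^{2}$.)

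What the paper actually does is quantitative rather than qualitative. Assuming $X$ is \emph{not} of $I$-maximal variation — so that \emph{every} $\eta\in R^{d}$ has nontrivial kernel on $R^{d-3}$ — it forms the incidence variety $\tilde{\cI}=\{([\eta],[\alpha])\,:\,\eta\alpha\in J\}$, which then dominates $\bP(R^{d})$; on the open locus of maximal-rank $\eta$, Lemma \ref{LEM:kercokerLAST} places $[\alpha]$ in $Y=\{[\alpha]\,:\,\alpha^{2}\in J^{2d-6}\}$, and a fiber-dimension count combined with Gorenstein duality (Lemma \ref{gring}) gives $\dim Y+k_{d-3}(\alpha)\geq g$, hence $\dim D_J^{2d-6}\geq g-1$ (Proposition \ref{PROP:IMAXINEQ}). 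This lower bound is then played off against the upper bound of Proposition \ref{PROP:JETSINJDEC} to produce the numerical contradiction $d<2$. Your argument omits this dimension-counting mechanism entirely, and it is the actual engine of the proof. In addition, the cases $d=3,4$ (where $2d-6<d-1$ and the machinery above does not apply) require the separate elementary treatment the paper gives them.
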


Before the proof, we introduce some notation and prove two technical propositions. Let $X=V(F)$ a smooth plane curve of degree $d$ and let $g=(d-1)(d-2)/2$ its genus. We will denote by $S=\bigoplus_{k\geq 0}S^k$ where $S^k=H^0(\cO_{\bP^2}(k))$ and by $J$ the Jacobian ideal associated to $X$, i.e. the ideal spanned by partial derivatives of $F$: $J=(F_{x_0},F_{x_1},F_{x_2})$. As recalled in Example \ref{EX:hypers}, the associated Jacobian ring $R=S/J$ is a Gorenstein ring with socle in degree $N=3d-6$. The graded pieces of $R$ are $R^k=S^k/J^k$ where $J^k=S^k\cap J$.
\vspace{2mm}

If $\alpha\in R^e$, we recall that we have set $K_{m}(\alpha)=\ker(\alpha\cdot R^{m}\to R^{m+e})$ and $k_{m}=k_m(\alpha)=\dim(K_{m}(\alpha))$. Since we defined $r_k$ to be equal to $\dim(R^k)$, it is convenient to set $s_k=\dim(S^k)=h^0(\cO_{\bP^2}(k))$.
\vspace{2mm}

If $p\geq 0$, according to the parity of $p$, we define $\tilde{s}$ to be one of the following Segre morphisms: 
\begin{equation}
\label{EQ:Segre}
\tilde{s}=\begin{cases} 
\mbox{ if } p=2k & ([\beta_1],[\beta_2])\in \bP(S^k)\times\bP(S^{k})\mapsto [\beta_1\cdot \beta_2]\in \bP(S^{p}) \\
\mbox{ if } p=2k+1 & ([\beta],[\gamma])\in \bP(S^k)\times\bP(S^{k+1})\mapsto [\beta\cdot \gamma]\in \bP(S^{p}).
\end{cases}
\end{equation}
We also denote by $D^{p}$ the image of $\tilde{s}$ and by $D_J^{p}$ the intersection $D^p\cap \bP(J^{p})$. In other words, elements of $D_J^p$ are (particular) decomposable elements which are also in the Jacobian ideal.
\vspace{2mm}

If $p$ is an integer such that $p\geq d-1$ we have that $S^{p-d}$ and $J^{p}$ are non-trivial (and $S^{p-d+1}=0$ if and only if $p=d$). Then we can carry on the following construction. For any $v=(v_0,v_1,v_2)\neq (0,0,0)$ let $F_v$ be the directional derivative $v\cdot \nabla F=v_0F_{x_0}+v_1F_{x_1}+v_2F_{x_2}$. If one denotes by $f_v$ the map
$$f_v:S^{p-d}\oplus S^{p-d+1}\to S^{p}\qquad f_v(A,B)=AF+BF_v$$
we have that $f_v$ is bilinear and injective. Indeed, having $(A,B)\neq (0,0)$ such that $AF+BF_v=0$ would yield $AF=-BF_v$ which is impossible, since $F$ is irreducible. 
Moreover, by construction, we have that $\Img(f_v)\subseteq J^{p}$. Let $\tilde{f_v}$ be the projectifization of $f_v$ and denote by 
$$E_v^{p}=\{[AF+BF_v]\,|\, A\in S^{p-d},\, B\in S^{p-d+1}\}$$ 
its image in $\bP(J^{p})$.
\vspace{2mm}

If $p\geq d$ we have then the diagram
$$
\xymatrix{
D^p\ar@{^{(}->}[r] & \bP(S^{p}) & & \\
 D_J^p\ar@{^{(}->}[u]\ar@{^{(}->}[r] & \bP(J^{p})\ar@{^{(}->}[u] & E_v^p\ar@{^{(}->}[l] & \bP(S^{p-d}\oplus S^{p-d+1})\ar@{->>}[l]_-{\tilde{f}_v}
}$$
where the vertical morphism is induced by the inclusion $J^{p}\subset S^{p}$.


\begin{proposition}
\label{PROP:JETSINJDEC}
Assume that $d-1 \leq p\leq 2d-4$ and take $v$ as above and general. 

Then $D_J^{p}\cap E_v^{p}$ is empty. In particular, 
\begin{equation}
\label{INEQ:BOUNDD_J}
\dim(D_J^p)=\dim(\bP(J^{p})\cap \Img(\tilde{s}))<\dim(\bP(J^p))-s_{p-d}-s_{p-d+1}+1
\end{equation}
where $\tilde{s}$ is the Segre morphism defined above.
\end{proposition}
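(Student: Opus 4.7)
The plan is to first reduce the inequality \eqref{INEQ:BOUNDD_J} to the disjointness statement $D_J^p\cap E_v^p=\emptyset$ for generic $v$, and then to establish that disjointness by restricting to $X$ and invoking the Castelnuovo Uniform Position Theorem (UPT).

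For the reduction I observe that $f_v$ is injective, as already shown in the preceding discussion, so $\tilde f_v$ is an isomorphism onto $E_v^p$ and $\dim E_v^p=s_{p-d}+s_{p-d+1}-1$. Combined with the standard fact that two disjoint closed subvarieties of a projective space $\bP^N$ have dimensions summing to at most $N-1$ (via the affine cone construction), the disjointness $D_J^p\cap E_v^p=\emptyset$ immediately gives the claimed upper bound on $\dim D_J^p$.

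For the disjointness I would argue by contradiction: if for $v$ in some Zariski-open set there exists $[G]\in D_J^p\cap E_v^p$, then we can write $G=\beta_1\beta_2=AF+BF_v$ with $\beta_i$ of the degrees prescribed by \eqref{EQ:Segre}. Restricting to $X$, where $F$ vanishes, yields $\beta_1|_X\,\beta_2|_X=B|_X\cdot F_v|_X$ in $H^0(\cO_X(p))$, equivalently the divisor equality $(\beta_1)_X+(\beta_2)_X=(B)_X+(F_v)_X$ on $X$. For generic $v$, the $d(d-1)$ points of $V(F_v)\cap X$ are distinct (smoothness of $X$ plus Bertini), and each of them lies on $V(\beta_1)\cup V(\beta_2)$; choosing the data $(\beta_1,\beta_2)$ algebraically along a finite \'etale cover of this open set, we get an (unordered) bipartition of these points that is monodromy-stable as $v$ varies.

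The final step is to invoke Castelnuovo's UPT, applied to the polar linear system $v\mapsto(F_v)_X$ on $X\subset\bP^2$, to conclude that the monodromy on the $d(d-1)$ points contains the full symmetric group, so the only monodromy-stable bipartitions are trivial. This forces one subset, say $V(\beta_2)\cap X$, to contain all of $V(F_v)\cap X$; Bezout then gives $d\cdot\deg\beta_2\ge d(d-1)$, hence $\deg\beta_2\ge d-1$, contradicting the bound $\deg\beta_2\le \lceil p/2\rceil\le d-2$ (which uses $p\le 2d-4$ and $d\ge 3$). The main technical obstacle is precisely this application of UPT: the polar system is only a $2$-dimensional subsystem of $|\cO_X(d-1)|$, so the usual very-ample formulations of UPT do not apply verbatim, and one likely has to exhibit transpositions in the monodromy explicitly by degenerating $v$ so that two points of $V(F_v)\cap X$ collide tangentially, producing a vanishing-cycle transposition.
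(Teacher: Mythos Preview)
Your reduction to disjointness and the overall strategy are correct, but two loose ends need tying. First, your UPT concern is a non-issue: the polar net $v\mapsto(F_v)_X$ is the pullback of the line system on the dual curve $X^*\subset(\bP^2)^*$ via the birational Gauss map $x\mapsto[\nabla F(x)]$, and since $X^*$ is irreducible and non-degenerate of degree $d(d-1)$, the classical Uniform Position Theorem for plane curves applies verbatim to its line sections. This is exactly how the paper proceeds, and no ad hoc vanishing-cycle argument is needed. Second, your ``monodromy-stable bipartition'' step has a gap: since $(\beta_1,\beta_2)$ live only on a finite cover $\tilde U\to U$, the bipartition is a priori stable only under the image of $\pi_1(\tilde U)$ in $S_{d(d-1)}$, not under the full symmetric group. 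You must add that this image, having finite index in $S_{d(d-1)}$, contains $A_{d(d-1)}$ (as $d(d-1)\ge 6$) and is therefore still transitive, before your B\'ezout contradiction fires.

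The paper's endgame is genuinely different: it works at a \emph{single} general $v$ and uses the uniform-position consequence of UPT rather than monodromy invariance of a globalized family. Knowing only that $\beta_1|_X$ vanishes at some $m\ge d(d-1)/2$ of the points $p_1,\dots,p_m\in P$ and not at a further point $p_{m+1}$, uniform position (all $m$-subsets of $P$ impose the same number of conditions on $|\cO_X(k)|$) furnishes $m+1$ linearly independent sections $\omega_i\in H^0(\cO_X(k))$ with $\omega_i(p_j)=\lambda_i\delta_{ij}$, yielding $d(d-1)/2+1\le m+1\le h^0(\cO_X(k))\le s_{d-2}=d(d-1)/2$. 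Your B\'ezout finish is numerically tidier, but the paper's argument is local in $v$ and sidesteps both the finite cover and the subgroup-of-$S_n$ step entirely.
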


\begin{proof}
Assume that $p=d-1$. Then $J^p=J^{d-1}$ is a linear system without base points and, by Bertini's Theorem, its general element is irreducible. On the other hand, if $p=d-1$ we have that $E_v^{p}=\{[F_v]\}$ is a single point so for $v$ general $E_v^{d-1}\cap D_J^{d-1}=\emptyset$. Since $s_{p-d}=0$ and $s_{p-d+1}=1$, Inequality \ref{INEQ:BOUNDD_J} holds.
\vspace{2mm}

Assume now that $p=2k\geq d$ so, by the hypothesis on $p$, we have 
\begin{equation}
\label{INEQ:In1}
k\leq d-2.
\end{equation}
Suppose, by contradiction, that there exists an element in $E^{2k}_v\cap D^{2k}_J$. Then, there exist $\beta_1,\beta_2\in S^{k}\setminus\{0\}$ and $A\in S^{2k-d},B\in S^{2k-d+1}$ such that
\begin{equation*}
\beta_1\beta_2=AF+BF_v.
\end{equation*}
The dual map of the curve $X$ fits into a diagram
$$
\xymatrix{
\bP^2 \ar[r]^-{\nabla F} & (\bP^2)^*\\
X \ar@{^{(}->}[u] \ar[r]_-{\nu} & X^*\ar@{^{(}->}[u]
}$$
where $\nabla F$ is the gradient of $F$. It is the morphism induced by the subsystem $|J^{d-1}|$ of $|S^{d-1}|$. It is well known that $\nu$ is a birational morphism and that $X^*$ is a curve of degree $d(d-1)$. 
Any choice of $v=(v_0,v_1,v_2)\neq (0,0,0)$ corresponds to the choice of a line in $(\bP^2)^*$, namely the line $L_v:v_0z_0+v_1z_1+v_2z_2=0$, if $z_i$ are projective coordinates on $(\bP^2)^*$. 
If $v$ is general, the corresponding line is also general so it cuts $X^*$ in exactly $d(d-1)$ distinct points. Since $v$ is general, we can also assume that the preimages of these points are distinct. Since $(\nabla F)^*(L_v)$ is the curve with equation $F_v=0$,
we have produced a set $P$ of $d(d-1)$ distinct points on $X$, which, by construction, also annihilate the polynomial $AF+BF_v$. 
In particular, we have that the product $\beta_1\beta_2$ vanish on all these points. 
Notice that neither $\beta_1$ nor $\beta_2$ can vanish on all the $d(d-1)$ points of $P$ since $\beta_i\in S^{k}=H^0(\cO_{\bP^2}(k))$ cuts on $X$ a divisor of degree $kd$ and by Inequality \eqref{INEQ:In1} we have $k\leq d-2$. Hence one of them (say $\beta_1$, for example) vanish on $m$ of theses points with 
\begin{equation}
\label{INEQ:In2}
m\geq d(d-1)/2
\end{equation}
and does not vanish on at least one of the other $d(d-1)-m$ points.
\vspace{2mm}

Let $\{p_1,\dots,p_m\}$ be the points of $P$ in the support of the divisor cut by $\beta_1$ on $X$ and let $P_{m+1}$ be another point chosen among the $d(d-1)$ points. By the Uniform Position Theorem, by following loops around $L_v$ we can permute the points $\{p_1,\dots,p_m,p_{m+1}\}$ via monodromy in any way we would like (because the monodromy is the full symmetric group of the fiber). Then we are able to construct $m+1$ sections $\omega_1,\dots,\omega_{m+1}$ of $\cO_{X}(k)$ such that $\omega_i(p_j)=\lambda_i\delta_{ij}$ where $\lambda_i\neq 0$ and $\omega_{m+1}=\beta_1|_{X}$. The above vanishing conditions ensure that these forms are also independent so we have $m+1\leq h^0(\cO_{X}(k))$. By Inequality \eqref{INEQ:In1} and since $s_k$ is an increasing sequence, we have $h^0(\cO_{X}(k))=h^0(\cO_{\bP^2}(k))=s_k\leq s_{d-2}$.
Then, by Inequality \eqref{INEQ:In2} we have the condition
$$\frac{d(d-1)}{2}+1\leq m+1\leq s_k\leq \frac{d(d-1)}{2}$$
which leads easily to a contradiction. The proof for $p\geq d$ with $p$ odd is analogous.
\vspace{2mm}

Since we have proved that $D_j^p$ and $E_v^{p}$ are disjoint we have also
$$\dim(E^{p}_v)+\dim(D^{p}_J)<\dim(\bP(J^{p})).$$
The last claim of the proposition, i.e. formula \eqref{INEQ:BOUNDD_J}, follows from the last inequality since, for $p\geq d$ we have $\dim(E_v^{p})=s_{p-d}+s_{p-d+1}-1$ by construction.
\end{proof}


The following corollary won't be used in what follows but, in our opinion is worth to be mentioned since it has relevant geometric meaning.


\begin{corollary}
\label{COR:EXPDIMENSION}
Assume that $d\geq 3$. Then, the intersection $D_J^{2d-4}=D^{2d-4}\cap \bP(J^{2d-4})$ has the expected dimension in $\bP(S^{2d-4})$. 
\end{corollary}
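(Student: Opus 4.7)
The plan is to sandwich $\dim D_J^{2d-4}$ between the upper bound from Proposition \ref{PROP:JETSINJDEC} and a lower bound coming from the projective dimension theorem, and then check that both bounds collapse onto the expected value.

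First I would assemble the relevant dimensions. The multiplication map $\bP(S^{d-2})\times \bP(S^{d-2})\to \bP(S^{2d-4})$ is generically two-to-one: by unique factorization in $S^{\bullet}$, the generic fiber over $[\beta_1\beta_2]$ reduces to the swap $\{(\beta_1,\beta_2),(\beta_2,\beta_1)\}$. Hence $\dim D^{2d-4}=2s_{d-2}-2=d(d-1)-2$. Gorenstein duality for the Jacobian ring $R$ with socle in degree $N=3d-6$ (Lemma \ref{gring}), together with the vanishing $J^{d-2}=0$, gives $\dim R^{2d-4}=\dim R^{d-2}=s_{d-2}$, hence $\dim \bP(J^{2d-4})=s_{2d-4}-s_{d-2}-1$. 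Assembling these, the expected dimension is
$$
e \;:=\; \dim D^{2d-4}+\dim \bP(J^{2d-4})-\dim \bP(S^{2d-4}) \;=\; \frac{d(d-1)}{2}-2 \;=\; \frac{d^2-d-4}{2}.
$$

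Next I would apply Proposition \ref{PROP:JETSINJDEC} at $p=2d-4$. Using $s_{d-3}=(d-1)(d-2)/2$ and $s_{d-4}=(d-2)(d-3)/2$ (reading $s_{-1}=0$ when $d=3$), the right-hand side of (\ref{INEQ:BOUNDD_J}) simplifies to $(d-2)(d+1)/2=e+1$. Integrality of dimensions then turns the strict inequality of Proposition \ref{PROP:JETSINJDEC} into $\dim D_J^{2d-4}\leq e$. For the matching lower bound, note that $D^{2d-4}$ is irreducible (being the image of an irreducible variety) and $\bP(J^{2d-4})$ is a linear subspace, so by the projective dimension theorem each irreducible component of the intersection has dimension at least $\dim D^{2d-4}+\dim \bP(J^{2d-4})-\dim \bP(S^{2d-4})=e$, provided $D_J^{2d-4}$ is non-empty. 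Non-emptiness is the classical observation that two closed subvarieties of $\bP^{n}$ whose dimensions sum to at least $n$ must meet; here this condition is equivalent to $e\geq 0$, which holds for all $d\geq 3$.

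The only real input is Proposition \ref{PROP:JETSINJDEC}; everything else is bookkeeping with binomial coefficients and a standard appeal to intersection theory in projective space. The only delicate point is the numerical identity that the bound of Proposition \ref{PROP:JETSINJDEC} collapses exactly to $e+1$, since it is precisely this coincidence (together with integrality of dimensions) that upgrades the strict inequality $<$ to the sharp $\leq e$.
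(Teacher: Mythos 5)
Your argument is correct and is essentially the paper's own proof: both apply Proposition \ref{PROP:JETSINJDEC} at $p=2d-4$, compute $\dim\bP(J^{2d-4})$ via Gorenstein duality and $\dim D^{2d-4}$ via generic finiteness of the multiplication map, and use integrality of dimensions to turn the strict inequality into $\dim D_J^{2d-4}\le \Edim(D_J^{2d-4})=\frac{d^2-d-4}{2}$. The only difference is that you spell out the matching lower bound (irreducibility of $D^{2d-4}$, the projective dimension theorem, and non-emptiness from $e\ge 0$), which the paper leaves implicit.
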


\begin{proof}
Since $d\geq 3$ we have $d-1\leq 2d-4$ and we can apply Proposition \ref{PROP:JETSINJDEC} with $p=2d-4$ and $k=d-2$. By Gorenstein duality we have $(R^{2d-4})^*\simeq R^{d-2}\simeq S^{d-2}$ so
$$\dim(\bP(J^{2d-4}))=s_{2d-4}-r_{2d-4}-1=s_{2d-4}-s_{d-2}-1=\frac{3d^2-9d+4}{2}.$$
On the other hand 
$$\dim(D^{2d-4})=2(s_{d-2}-1)=(d-2)(d+1)\quad\mbox{ and }\quad \dim(\bP(S^{2d-4}))=s_{2d-4}-1=(d-2)(2d-1)$$
so we have that the expected dimension $\Edim(D_J^{2d-4})$ of the intersection  $D_J^{2d-4}=D^{2d-4}\cap \bP(J^{2d-4})$ is
$$\Edim(D_J^{2d-4})=\dim(D^{2d-4})+\dim(\bP(J^{2d-4}))-\dim(\bP(S^{2d-4}))=\frac{(d-2)(d+1)}{2}-1.$$
Comparing this with the inequality given by Proposition \ref{PROP:JETSINJDEC} we get
$$\dim(D_{J}^{2d-4})\leq\frac{3d^2-9d+4}{2}-s_{d-4}-s_{d-3}+1-1=\Edim(D_J^{2d-4})$$
so $D_J^{2d-4}$ has actually the expected dimension.
\end{proof}


The following proposition gives a criterion which we will use to prove the main theorem.

\begin{proposition}
\label{PROP:IMAXINEQ}
Let $X$ be a smooth planar curve of degree $d\geq 5$ and genus $g$. If $\dim(D_J^{2d-6})< g-1$ we have that $X$ has $I$-maximal variation as plane curve.
\end{proposition}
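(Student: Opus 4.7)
The plan is to argue by contradiction. Suppose $X$ does not have $I$-maximal variation as a plane curve, so that
\[
\rho:=\max_{\eta\in R^{d}}\Rk\bigl(\eta\cdot\colon R^{d-3}\to R^{2d-3}\bigr)
\]
is strictly less than $g$. Put $k:=g-\rho\ge 1$ and let $U\subset R^{d}$ be the Zariski open subset where this maximum is attained, so that $K_{d-3}(\eta)$ has constant dimension $k$ on $U$. Since $d+2(d-3)=3d-6=N$, Lemma \ref{LEM:kercokerLAST} applies with $e=d-3$ and yields $\alpha^{2}=0$ in $R^{2d-6}$ for every $\eta\in U$ and every $\alpha\in K_{d-3}(\eta)$. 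Polarising, $K_{d-3}(\eta)\cdot K_{d-3}(\eta)\subset J^{2d-6}$ inside $S^{2d-6}$.

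Introduce $\cK:=\bigcup_{\eta\in U}K_{d-3}(\eta)\subset R^{d-3}$ and, for $\alpha\in R^{d-3}$, the annihilator $K^{\perp}(\alpha):=\ker\bigl(\alpha\cdot\colon R^{d-3}\to R^{2d-6}\bigr)$. Write $\sigma(\alpha):=\Rk(\alpha\cdot\colon R^{d-3}\to R^{2d-6})$ and let $\sigma_{\ast}$ be its generic (hence maximum) value on $\cK$; by Gorenstein duality $\sigma(\alpha)$ coincides with the rank of $\alpha\cdot\colon R^{d}\to R^{2d-3}$. The incidence
\[
\cE:=\bigl\{(\eta,[\alpha])\in U\times\bP(R^{d-3}) : \eta\alpha=0\bigr\}
\]
has dimension $\dim R^{d}+k-1$. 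Projecting it onto $\bP(R^{d-3})$, one obtains $\bP(\cK)$ as image, with generic fibre $U\cap\{\eta\in R^{d} : \eta\alpha=0\}$, a dense open in a linear subspace of dimension $\dim R^{d}-\sigma_{\ast}$. Hence $\dim\bP(\cK)=k-1+\sigma_{\ast}$.

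Now form the second incidence
\[
\cF:=\bigl\{([\alpha],[\beta])\in\bP(\cK)\times\bP(R^{d-3}) : \alpha\beta=0\text{ in }R^{2d-6}\bigr\}.
\]
The fibre over a generic $[\alpha]\in\bP(\cK)$ is $\bP(K^{\perp}(\alpha))$ of dimension $g-\sigma_{\ast}-1$, whence the decisive cancellation
\[
\dim\cF=(k-1+\sigma_{\ast})+(g-\sigma_{\ast}-1)=g+k-2,
\]
eliminating the unknown $\sigma_{\ast}$. The multiplication map $\cF\to\bP(S^{2d-6})$, $([\alpha],[\beta])\mapsto[\alpha\beta]$, is well defined since $S$ is a domain (so $\alpha\beta\ne 0$) and its image lies in $D_{J}^{2d-6}$ as the products $\alpha\beta$ are decomposable and lie in $J^{2d-6}$ by construction. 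As $S=\bC[x_{0},x_{1},x_{2}]$ is a UFD, a homogeneous polynomial of degree $2d-6$ admits only finitely many factorisations as $\alpha\beta$ with $\alpha,\beta\in S^{d-3}$, so the map is generically finite. Therefore
\[
\dim D_{J}^{2d-6}\ge g+k-2\ge g-1,
\]
contradicting the hypothesis; hence $X$ has $I$-maximal variation.

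The main subtlety in executing this plan lies in the dimension count $\dim\bP(\cK)=k-1+\sigma_{\ast}$: one must check that $U\cap\{\eta : \eta\alpha=0\}$ is dense open in the linear space $\{\eta : \eta\alpha=0\}$ for generic $[\alpha]\in\bP(\cK)$ and that $\sigma$ is constant on a dense open of $\cK$; both follow from the lower semi-continuity of rank. Once this is settled, the elegant cancellation of the a priori unknown $\sigma_{\ast}$ in the formula $\dim\cF=g+k-2$ delivers the required lower bound on $\dim D_{J}^{2d-6}$ independently of the actual value of $\rho$.
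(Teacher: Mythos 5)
Your proof is correct and follows essentially the same route as the paper's: your incidence $\cE$ is the paper's $\tilde{\cI}$ restricted to the maximal-rank locus, $\bP(\cK)$ plays the role of the locus $Y=\{[\alpha]\,:\,\alpha^2\in J^{2d-6}\}$, your $\cF$ is the paper's correspondence $Z$ with its finite multiplication map into $D_J^{2d-6}$, and the cancellation of $\sigma_{*}$ is exactly the paper's appeal to Lemma \ref{gring} (Gorenstein duality of ranks). The only cosmetic difference is that you compute $\dim\bP(\cK)=k-1+\sigma_{*}$ exactly, where the paper instead bounds $\dim Y$ from below via the dominance of $p_1$.
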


\begin{proof}
Assume that $X$ is not of $I$-maximal variation. We will prove that $\dim(D_J^{2d-6})\geq g-1$.
Since $d\geq 5$ we have $2d-6\geq d-1$ and then $J^{2d-6}$ is not empty. Recall that, according to Equation \eqref{EQ:Segre}, for $p=2d-6$, we have the Segre morphism $\tilde{s}:\bP(S^{d-3})\times \bP(S^{d-3})\to \bP(S^{2d-6})$ such that $([\alpha],[\beta])\mapsto [\alpha\beta]$ and $D^{2d-6}_J$ is, by definition, the intersection of $\bP(J^{2d-6})$ with the image of $\tilde{s}$.
\vspace{2mm}

In order to prove the desired inequality, set $$Y=\{[\alpha]\in\bP(R^{d-3})\,|\, \alpha^2\in J^{2d-6}\}$$ and consider the incidence correspondence $$Z=\{([\alpha],[\beta])\in Y\times \bP(R^{d-3})\,|\, \alpha\beta\in J^{2d-6}\}$$
with its projection $\pi_1$ and $\pi_2$. Then we have a diagram
$$
\xymatrix{
\bP(J^{2d-6}) & Z=\{([\alpha],[\beta])\in Y\times \bP(R^{d-3})\,|\, \alpha\beta\in J^{2d-6}\}\ar[r]^-{\pi_2}\ar[d]_-{\pi_1}\ar[l]_-{\psi}\ar[ld]_-{\psi} & \bP(R^{d-3}) \\
D_J^{2d-6}\ar@{^{(}->}[u] & Y=\{[\alpha]\in\bP(R^{d-3})\,|\, \alpha^2\in J^{2d-6}\}
}$$
where $\psi:Z\to \bP(J^{2d-6})$ is the multiplication morphism $\psi([\alpha],[\beta])=[\alpha\beta]$. Note that it has image in $D^{2d-6}_J$ by construction and it is finite since it is the restriction of the Segre morphism $\tilde{s}$. 
Then, we have $\dim(D^{2d-6}_J)\geq \dim(\Img(\psi))=\dim Z$. By construction $\pi_1$ is surjective (indeed, if $[\alpha]\in Y$, we have $([\alpha],[\alpha])\in Z$) and $\pi_1^{-1}([\alpha])=[\alpha]\times \bP(K_{d-3}(\alpha))$ so, for $\alpha$ general we have
$$\dim(D^{2d-6}_J)\geq \dim(Z)=\dim(Y)+k_{d-3}(\alpha)-1.$$

In particular, in order to conclude the proof, it is enough to prove that 
\begin{equation}
\label{INEQ:EQASS1}
\dim(Y)+k_{d-3}(\alpha)\geq g.
\end{equation}

Consider the incidence correspondence 
$$\tilde{\cI}=\{([\eta],[\alpha])\in \bP(R^{d})\times\bP(R^{d-3})\,|\, \eta\alpha\in J^{2d-3}\}$$
with its projections $p_1$ and $p_2$. Since $X$ is not of $I$-maximal variation we have that $p_1$ is surjective. Then, there is an irreducible component $\cI$ of $\tilde{\cI}$ which dominates $\bP(R^{d})$ via $p_1$. We will denote by $p_i$ also the restriction of $p_i$ to $\cI$ for brevity. Since $p_1$ is dominant we have
\begin{equation}
\label{INEQ:1}
\dim(\cI)\geq \dim \bP(R^d)=r_d-1.
\end{equation}
Let $U$ be the open dense subset of $\cI$ with the pairs $([\eta],[\alpha])$ with $\eta\alpha\in J$ and $\eta\cdot:R^{d-3}\to R^{2d-3}$ of maximal rank. Then, by Lemma \ref{LEM:kercokerLAST}, if $([\eta],[\alpha])\in U$ we have $\alpha^2\in J^{2d-6}$. Hence we have $p_2|_U:U\to Y$ and $\dim(p_2(U))=\dim(p_2(\cI))\leq \dim(Y)$.
Then we can write $$\dim(p_2^{-1}([\alpha]))\geq \dim(\cI)-\dim(\im(p_2)).$$
By Inequality \eqref{INEQ:1} and since the fiber over $[\alpha]\in p_2(\cI)$ is $p_2^{-1}([\alpha])=\bP(K_{d}(\alpha))\times [\alpha]$, we have
\begin{equation}
r_d-1\leq \dim(\cI)\leq k_{d}(\alpha)-1+\dim(Y).
\end{equation}
In particular, by Lemma \ref{gring}, we have that
\begin{equation}
\label{EQ:2}
\dim(Y)\geq r_d-k_d(\alpha)=r_{d-3}-k_{d-3}(\alpha)=g-k_{d-3}(\alpha).
\end{equation}
which yields Inequality \eqref{INEQ:EQASS1}, as claimed.
\end{proof}


We are now ready to prove our main theorem.

\begin{proof}
The Theorem is true for $d=3$. Indeed, let $\pi:\cX\to B$ be the family of smooth planar curves of degree $3$. Then, for all $b\in B$ we have that $\KS_b:T_{B,b}\to H^1(T_{X_b})\simeq \bC$ is not zero and so it is surjective. Then, by Lemma \ref{LEM:FULLMAXVAR} we have that $X_b$ has $I$-maximal variation as planar curve. 
\vspace{2mm}

Assume now $d\geq 4$ and proceed by contradiction by assuming also that $X$ is not of $I$-maximal variation. 
\vspace{2mm}

If $d=4$, for $\eta\in R^4$ general we have that $\eta\cdot: R^1\to R^5$ has non trivial kernel. If $\alpha\in K_{1}(\eta)$, by Lemma \ref{LEM:kercokerLAST}, we would have $\alpha^2\in J^{2}$. Since $J^{2}=\{0\}$ (as $J$ is generated in degree $d-1=3$) we have $\alpha=0$ which gives a contradiction. One can also prove the thesis in this case using that quartic curves are canonical.
\vspace{2mm}

Let $d\geq 5$. Under this assumption we can apply Proposition \ref{PROP:IMAXINEQ}. By doing so we have the inequality
\begin{equation}
\label{INEQ:ASS1}
\frac{(d-1)(d-2)}{2}-1=g-1\leq \dim(D^{2d-6}_J).
\end{equation}
Notice that, if we assume $d=5$, this yields $\dim(D_J^{4})\geq 5$ which is impossible since $\dim(D_J^{4})\leq \dim(\bP(J^4))=2$. Hence we can assume that $d\geq 6$.
\vspace{2mm}

Inequality \eqref{INEQ:ASS1} gives us a bound for the dimension of $D_J^{2d-6}$ from below. We aim now to get a bound from above. Since $d\geq 6$, if we set $k=d-3$ and $p=2k$ we have $d\leq p\leq 2d-4$ so we can apply Proposition \ref{PROP:JETSINJDEC} to obtain
$$\dim(D_J^{2d-6})<\dim(\bP(J^{2d-6}))-s_{d-6}-s_{d-5}+1.$$
Since $(R^{2d-6})^*\simeq R^{d}$, by Gorenstein duality, we have
$$\dim(J^{2d-6})=s_{2d-6}-r_{2d-6}=s_{2d-6}-r_{d}=s_{2d-6}-(s_{d}-9)$$
so the above inequality yields
\begin{equation}
\label{INEQ:ASS2}
\dim(D_J^{2d-6})<s_{2d-6}-s_d-s_{d-6}-s_{d-5}+9=\frac{(d-1)(d-4)}{2}
\end{equation}

We can conclude the Theorem by observing that Inequalities \eqref{INEQ:ASS1} and \eqref{INEQ:ASS2} lead to a contradiction. Indeed, from the two inequalities we have
$$\frac{d(d-3)}{2}=\frac{(d-1)(d-2)}{2}-1=g-1\leq \dim(\bP(J^{2d-6}))<\frac{(d-1)(d-4)}{2}$$
so we have $d(d-3)<(d-1)(d-4)$ which is true if and only if $d<2$. Then planar curves of degree $d\geq 3$ have $I$-maximal variation.

\end{proof}


\section{Yukawa Coupling for hypersurfaces}
\label{SEC:YC}

Let $n\geq 2$ and consider $S=\bK[x_0,\dots,x_n]$, the homogenous coordinate ring of $\PP^n_{\bK}$ with the standard graduation $S=\bigoplus_{m\geq 0} S^m$. Consider the Fermat polynomial of degree $d$, i.e. $F_d=\sum_{i=0}^nx_i^d$,
and its Jacobian ideal $$J_{F_d}=J_d=(x_0^{d-1},\dots x_n^{d-1}).$$ 
We set 
$$J_d^k=J_d\cap S^k,\qquad H=F_1=\sum_{i=1}^nx_i \quad \mbox{ and }\quad \sigma_d=\left(\Pi_{i=0}^{n}x_i\right)^{d-2}$$ so that
$\sigma_d$ is a generator for the socle of the Jacobian ring $R_d=S/J_d$.

For $d,k\geq 0$ consider the following property:
$$(\star)_{d,k}: \qquad \mbox{ if } G\in S^k, G\neq 0\quad \Longrightarrow \quad G\cdot H^{d(n-1)}\not\in J_{d} \, (\mbox{more precisely, } J_d^{k+d(n-1)}).$$

\begin{lemma}
\label{LEM:MUNSTER}
Property $(\star)_{d,k}$ is true if $d\geq n+1$ and $k\leq d-n-1$.
\end{lemma}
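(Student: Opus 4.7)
The plan is to prove $(\star)_{d,k}$ by induction on $n$, the projective dimension.

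For the base case $n=1$, we have $d(n-1)=0$ and hence $H^{d(n-1)}=1$, so the property reduces to showing $G\notin J_d^{k}$ for every nonzero $G\in S^{k}$ with $k\leq d-2$. This is immediate, since $J_d=(x_0^{d-1},x_1^{d-1})$ has all minimal generators in degree $d-1$, so $J_d^{k}=\{0\}$ for $k<d-1$.

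For the inductive step, assume the property holds in $\bK[x_0,\dots,x_{n-1}]$ for all admissible pairs $(d',k')$ (i.e.\ $d'\geq n$ and $k'\leq d'-n$), and deduce it in $\bK[x_0,\dots,x_n]$. Suppose by contradiction that some nonzero $G\in S^{k}$ with $k\leq d-n-1$ satisfies
\begin{equation*}
G\cdot H^{d(n-1)}=\sum_{i=0}^{n}x_i^{d-1}A_i.
\end{equation*}
The key idea is to apply the iterated partial derivative $\partial_{x_n}^{d}$ to this relation. By Leibniz, the left-hand side becomes a weighted sum $\sum_{s=0}^{k}c_s\,(\partial_{x_n}^{s}G)\cdot H^{d(n-2)+s}$ with nonzero scalars $c_s$: the crucial fact is that the power of $H$ has dropped from $d(n-1)$ to values in $[d(n-2),d(n-2)+k]$, aligning with the exponent $d((n-1)-1)=d(n-2)$ used in the inductive hypothesis. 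Restricting to the hyperplane $\{x_n=0\}$ replaces each $H^{d(n-2)+s}$ by $(H')^{d(n-2)+s}$ with $H'=x_0+\cdots+x_{n-1}$, and annihilates nearly all of the boundary terms produced by $\partial_{x_n}^{s}(x_n^{d-1})$ on the right-hand side: only the contribution from $s=d-1$, namely $d!\,(\partial_{x_n}A_n)|_{x_n=0}$, survives. Combining the resulting identity with the restriction of the original relation $G|_{x_n=0}\,(H')^{d(n-1)}\in J_d'$ and with Euler's identity $\sum_{i} x_i\partial_{x_i}G=kG$, one consolidates the various $(\partial_{x_n}^{s}G)|_{x_n=0}$ terms into a single polynomial $\widetilde G\in \bK[x_0,\dots,x_{n-1}]$ of degree at most $d-n$ satisfying $\widetilde G\cdot (H')^{d(n-2)}\in J_d'=(x_0^{d-1},\dots,x_{n-1}^{d-1})$. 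Provided $\widetilde G\neq 0$, this contradicts the inductive hypothesis $(\star)_{d,\deg\widetilde G}$ in $n$ variables.

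The main obstacle is that partial differentiation fails to preserve the Jacobian ideal: $\partial_{x_n}(x_n^{d-1})=(d-1)x_n^{d-2}$ escapes $J_d$. The combined operation of taking $\partial_{x_n}^{d}$ and then restricting to $\{x_n=0\}$ is designed precisely to annihilate most of these stray boundary terms by pure degree reasons; but the combinatorics of the single surviving boundary contribution, and of the Euler-type consolidation of the derivatives of $G$, are the technical heart of the argument. A secondary subtlety is ensuring that $\widetilde G\neq 0$: if $G|_{x_n=0}$ vanishes, one factors $G=x_n G'$ with $\deg G'=k-1\leq d-n-2$ (still admissible) and iterates the argument on $G'$.
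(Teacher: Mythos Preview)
Your induction on $n$ is a natural idea and the base case $n=1$ is fine, but the inductive step has a real gap at exactly the point you flag as ``the technical heart''.

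After applying $\partial_{x_n}^{d}$ to $G\cdot H^{d(n-1)}=\sum_i x_i^{d-1}A_i$ and restricting to $\{x_n=0\}$, you correctly observe that a single boundary term survives from the $i=n$ summand, namely $d!\,(\partial_{x_n}A_n)|_{x_n=0}$. The difficulty is that this term is an essentially arbitrary polynomial of degree $d(n-2)+k$ in $x_0,\dots,x_{n-1}$: the coefficients $A_i$ in the representation $\sum_i x_i^{d-1}A_i$ are only determined up to Koszul syzygies, and nothing forces $(\partial_{x_n}A_n)|_{x_n=0}$ to lie in $J_d'$ or to be divisible by $(H')^{d(n-2)}$. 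But one of these two things must happen if you want to extract a polynomial $\widetilde G$ of degree $\le d-n$ with $\widetilde G\cdot (H')^{d(n-2)}\in J_d'$. Neither the restricted original relation $G|_{x_n=0}\cdot (H')^{d(n-1)}\in J_d'$ nor Euler's identity gives you any handle on $A_n$, so I do not see how the consolidation you describe can be carried out. (Your treatment of the case $\widetilde G=0$ is also incomplete: $\widetilde G$ would involve all the $(\partial_{x_n}^{s}G)|_{x_n=0}$ together with the boundary term, not just $G|_{x_n=0}$, so factoring out $x_n$ does not obviously reduce the problem.)

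For comparison, the paper's argument avoids this obstruction by inducting on $d$ rather than on $n$. Instead of differentiating many times in one variable, it applies the operator $\hat\partial_j=\prod_{i\neq j}\partial_{x_i}$, i.e.\ once in each variable except $x_j$. The point is that a single $\partial_{x_i}$ sends $x_i^{d-1}$ to $(d-1)x_i^{d-2}\in J_{d-1}$, so $\hat\partial_j$ maps $J_d$ into $J_{d-1}$ \emph{with no boundary terms at all}. Taking differences $\hat\partial_i-\hat\partial_j$ then produces a differential equation in $S$ to which the inductive hypothesis $(\star)_{d-1,d-1-n-1}$ applies cleanly, eventually forcing $(\partial_i-\partial_j)G=0$ for all $i,j$ and hence $G=\alpha H^{d-n-1}$. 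The moral is that dropping $d$ by one keeps you inside a Jacobian ideal, whereas dropping $n$ by one via restriction does not.
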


\begin{proof}
First of all, notice that it is enough to prove the statement for $k=d-n-1$. Indeed, if $(\star)_{d,d-n-1}$ holds and if $G'\in S^{k}$ with $G'\neq 0$ and $G'\cdot H^{d(n-1)}\in J_d^{k+d(n-1)}$ for some $k<d-n-1$ then we would have
$G'\cdot H^{d(n-1)}\cdot H^{e} \in J_d^{k+d(n-1)+e}$ for all $e\geq 1$. In particular, for $e=d-n-1-k$ we would have 
$$G'\cdot H^{d(n-1)}\cdot H^{d-n-1-k}=(G'\cdot H^{d-n-1-k})\cdot H^{d(n-1)}\in J_{d}$$ but this is impossible as $G'\cdot H^{d-n-1-k}\in S^{d-n-1}\setminus \{0\}$
and we are assuming $(\star)_{d,d-n-1}$. Hence $(\star)_{d,k}$ holds also for $k<d-n-1$. 
\vspace{2mm}

Set $k=d-n-1$. We will prove now that $(\star)_{d,k}$ holds by induction on $d\geq n+1$. First of all notice that if $d=n+1$ we have $k=0$ and the claim is equivalent to say that $H^{d(n-1)}\not \in J_d$. Since $d=n+1$ we have $$H^{d(n-1)}=H^{(n+1)(d-2)}=\cdots + \frac{[d(d-2)]!}{(d-2)!d}\left(\Pi_{i=0}^nx_i\right)^{d-2}+\cdots$$
so $H^{d(n-1)}$ is equal to $\lambda\cdot \sigma_d$ in $R_d$ with $\lambda\neq 0$. Since $\sigma_d$ generates the socle of $R_d$ we have that $H^{d(n-1)}\not\in J_d$, as claimed.
\vspace{2mm}

Assume now, as induction hypothesis, that $(\star)_{l,l-n-1}$ is true for $l\leq d-1$. Let $G\in S^{d-n-1}$ and assume $L=G\cdot H^{d(n-1)}\in J_d^{dn-n-1}$. In order to conclude our proof we need to show that $G=0$.
\vspace{2mm}

For simplicity we will introduce the following notations. Set $I_n=\{0,1,\dots, n\}$. If $I\subseteq I_n$ we set $|I|$ to be the cardinality of $I$ and call it length of $I$. 
For any polynomial $g\in S^m$ and $I=\{i_1,\dots,i_r\}\subseteq I_n$ denote by $\p_I(g)$ the derivative of $g$ with respect to all the variables $\{x_{i_1},\dots,x_{i_r}\}$. 
If $I\subseteq$ we set $\hat{I}=I_n\setminus I$ and $\ph_I:=\p_{\hat{I}}$, i.e. the derivative with respect to all the variables with indices not in $I$. For brevity, we will write $\p_i$ and $\ph_i$ instead of $\p_{\{i\}}$ and $\ph_{\{i\}}$. 
\vspace{2mm}

Since $L= G\cdot H^{d(n-1)}\in J^{n(d-1)-1}_d=(x_0^{d-1},\dots,x_n^{d-1})$ we can write it as
\begin{equation}
\label{EQ:GHL}
L=\sum_{i=0}^{n}x_i^{d-1}f_i
\end{equation}
for suitable $f_i\in S^{n(d-1)-n}$. We claim that $\ph_{j}(L)\in J_{d-1}$ for all $j$. This follows easily since $J_{d-1}$ is generated by the monomials $x_0^{d-2},\dots,x_n^{d-2}$ and since
\begin{multline*}
\ph_{j}(L)=\sum_{i=0}^n\ph_j(f_ix_i^{d-1})=\ph_j(f_jx_j^{d-1})+\sum_{i\neq j}\ph_j(f_ix_i^{d-1})=x_j^{d-1}\ph_j(f_j)+
\sum_{i\neq j}\ph_{\{i,j\}}(\p_i(f_ix_i^{d-1}))=\\
=x_j^{d-1}\ph_j(f_j) + \sum_{i\neq j}\ph_{\{i,j\}}(\p_i(f_i)x_i^{d-1}+(d-1)f_ix_i^{d-2})=
\sum_{i=0}^{n}\ph_{\{j\}}(f_i)x_i^{d-1}+(d-1)\sum_{i=0}^{n}f_ix_i^{d-2}.
\end{multline*}

Now we want to express the difference $\ph_i(L)-\ph_j(L)$ (which, as we have just shown, is an element of $J_{d-1}$). In order to do so, notice that, for $I\subseteq \{0,\dots,n\}$ and $s\geq |I|$ we have
\begin{equation}
\label{EQ:DERGHs}
\p_I(GH^{s})=\sum_{m=0}^{|I|}\frac{s!}{(s-|I|+m)!}H^{s-|I|+m}\left(\sum_{\substack{|J|=m\\J\subseteq I}}\p_J\right)(G).
\end{equation}
In particular, using Equation \eqref{EQ:DERGHs}, we can write 
\begin{equation}
\label{EQ:DERGHs2}
\ph_i(GH^{d(n-1)})=\lambda_{0}GH^{d(n-1)-n} + H^{(d-1)(n-1)}\sum_{m=1}^{n}\lambda_{m}H^{m-1}\left(\sum_{\substack{|J|=m\\i\not\in J}}\p_J\right)(G)
\end{equation}
where all the coefficients $\lambda_m$ are strictly positive for $m=0,\dots,n$ and $\lambda_n=1$. Hence, if $i\neq j$, we have

$$J_{d-1}\ni (\ph_i-\ph_j)(GH^{d(n-1)})= H^{(d-1)(n-1)}\left[\sum_{m=1}^{n}\lambda_{m}H^{m-1}\left(\sum_{\substack{|I|=m\\i\not\in I}}\p_I-\sum_{\substack{|J|=m\\j\not\in J}}\p_J\right)(G)\right].$$

Then, the sum in the square bracket is $0$ by the induction hypothesis $(\star)_{d-1,(d-1)-n-1}$ since it is an element of $S^{(d-1)-n-1}$ which multiplied by $H^{(d-1)(n-1)}$ is in $J_{(d-1)}$.

$$\sum_{m=1}^{n}\lambda_{m}H^{m-1}\Delta^{m}_{i,j}(G)=0\qquad \mbox{ with }\qquad \Delta^{m}_{i,j}=\left(\sum_{\substack{|J|=m\\i\not\in J}}\p_J-\sum_{\substack{|J|=m\\j\not\in J}}\p_J\right).$$

If $|J|=m$ and $i,j\in J$ then $|J|$ does not give contribution to the above sum. On the other hand the same is true if both $i$ and $j$ are not in $J$ since the contributions cancels out. If $J$ give a contribution to the sum then either $i\not\in J$ and $J=J'\cup \{j\}$ or $j\not\in J$ and $J=J'\cup\{j\}$. Then we can write
\begin{equation}
\Delta^m_{i,j}=\sum_{\substack{|J|=m\\i\not\in J,j\in J}}\p_J-\sum_{\substack{|J|=m\\j\not\in J,i \in J}}\p_J=
\sum_{\substack{|J|=m-1\\i,j\not\in J}}\p_J\p_j-\sum_{\substack{|J|=m-1\\i,j\not\in J}}\p_J\p_i=\left(\sum_{\substack{|J|=m-1\\i,j\not\in J}}\p_J\right)(\p_j-\p_i)
\end{equation}
and we have proven that for all $i\neq j$, if we set $G_{i,j}=(\p_j-\p_i)(G)$, $G_{i,j}'$ satisfy the following differential equation:
\begin{equation}
\label{EQ:DIFFEQ}
\sum_{m=1}^{n}\lambda_{m}H^{m-1}\Gamma_{i,j}^{m-1}(G_{i,j})=0 \qquad \mbox{ where }\qquad \Gamma_{i,j}^{m-1}=\sum_{\substack{|J|=m-1\\i,j\not\in J}}\p_J
\end{equation}
with the coefficients $\lambda_m$ strictly positive and $\Gamma_{i,j}^{0}=\id$. 
\vspace{2mm}

We claim that $G_{i,j}=0$. Notice that $\lambda_1\neq 0$ and $\Gamma_{i,j}^{0}=\id$ imply that $G_{i,j}$ is divisible by $H$ so we can write $G_{i,j}=HG_{i,j}'$. We claim that $G_{i,j}'$ satisfies a differential equation like the one in \eqref{EQ:DIFFEQ} (with different coefficients $\lambda_m$ but always strictly positive). If $I=\{i_1,\dots,i_m\}$ then
$$\p_I(H K) = \sum_{\substack{k=1}}^{m}\p_{I\setminus\{i_k\}}(K)+H\p_I(K)\quad \mbox{ so }\quad \Gamma_{i,j}^{m-1}(H\cdot K)=\Gamma_{i,j}^{m-2}(K)+H\Gamma_{i,j}^{m-1}(K).$$
Hence, from Equation \eqref{EQ:DIFFEQ} we have
\begin{multline}
0=\sum_{m=1}^{n}\lambda_{m}H^{m-1} \Gamma_{i,j}^{m-1}(HG_{i,j}')=\lambda_1HG_{i,j}'+\sum_{m=2}^{n}\lambda_{m}H^{m-1} (\Gamma_{i,j}^{m-2}(G_{i,j}')+H\Gamma_{i,j}^{m-1}(G_{i,j}'))=\\
=\lambda_1HG_{i,j}'+\sum_{m=2}^{n}\lambda_{m}H^{m-1} \Gamma_{i,j}^{m-2}(G_{i,j}')+\sum_{m=2}^{n}\lambda_{m}H^{m}\Gamma_{i,j}^{m-1}(G_{i,j}'))=\\
=\lambda_1HG_{i,j}'+\lambda_2HG_{i,j}'+\sum_{m=2}^{n-1}(\lambda_{m}+\lambda_{m+1})H^{m} \Gamma_{i,j}^{m-1}(G_{i,j}')+\lambda_{n}H^{n}\Gamma_{i,j}^{n-1}(G_{i,j}').
\end{multline}
By dividing by $H$ we get 
$$\sum_{m=1}^{n}\lambda_m'H^{m-1}\Gamma_{i,j}^{m-1}(G_{i,j}')=0 \qquad \mbox{ where }\qquad \lambda_m'=\lambda_{m}+\lambda_{m+1} \mbox{ for }m\leq n-1\mbox { and }\lambda_{n}'=\lambda_n.
$$
Since $\lambda_m>0$ we have, as claimed, that the coefficients of the differential equation are strictly positive. In particular, $\lambda_1'\neq 0$ and so we obtain, as before, that $G_{i,j}'$ is divisible by $H$ and we can iterate this process. After a finite number of iteration of this process we have $G_{i,j}=H^{d-n-3}\cdot G_{i,j}''$ with $G_{i,j}''\in S^1$ that satisfy an equation like \eqref{EQ:DIFFEQ} with coefficients $\lambda_m''>0$ for all $m$. Since $G_{i,j}''\in S^1$ we have $\Gamma_{i,j}^{m-1}(G_{i,j}'')=0$ as soon as $m\geq2$. Then the differential equation satisfied by $G_{i,j}''$ is simply $\lambda_1''G_{i,j}''=0$ which yields $G_{i,j}''=0$ and then, finally $G_{i,j}=H^{d-n-3}\cdot 0=0$ as claimed.
\vspace{2mm}

Since $G_{i,j}=(\p_i-\p_j)(G)=0$ for all $i,j$ we have $\p_i(G)=\p_j(G)$ for all $i,j$. We claim that $K\in S^m$ with $\p_0(K)=\cdots=\p_n(K)$ can be written as $\alpha H^{m}$ for suitable $\alpha$. If $m=1$ this is clear. If we assume that the claim holds till $m-1$ and $K\in S^m$ with $\p_0(K)=\cdots=\p_n(K)=D$ we have that $D\in S^{m-1}$ and satisfies the same condition. Indeed, if $i,j$ are two different indices, we have $\p_i(D)=\p_i(\p_j K)=\p_j(\p_i K)=\p_j(D)$. Then, by induction we have $D=\alpha'H^{m-1}$. Using Euler relation we have
$$K=\frac{1}{m}\sum_{i=0}^nx_i\p_iK=\frac{1}{m}\sum_{i=0}^nx_i\alpha'H^{m-1}=\frac{1}{m}\alpha'H^m,$$
as claimed and we get $G=\alpha H^{d-n-1}$.
\vspace{2mm}

Since $G=\alpha H^{d-n-1}$ and $G\cdot H^{d(n-1)}\in J_d$ we have that $\alpha H^{d(n-1)+d-n-1}$ is $0$ in the Jacobian ring $R_d$. Hence, $\alpha H^{d(n-1)+d-n-1}\cdot H^{d-n-1}=\alpha H^{(n+1)(d-2)}$ is also $0$ in the Jacobian ring. As $H^{(d-2)(n-1)}=\lambda\cdot \sigma_d$ in $R$ with $\lambda\neq 0$ as observed ad the beginning of the proof of this theorem we have $0=\alpha\lambda\sigma_d$ in $R$ which is only possible if and only if $\alpha=0$, i.e. if and only if $G=0$.
\end{proof}

\begin{proposition}
\label{PROP:MAXVARHYPERSUP}
The general hypersurface in $Y\subseteq \bP^n$ of degree $d\geq n+1$ has $I$-maximal variation, i.e. $\delta_M(\cO_{\bP^n}(d))=h^0(\omega_Y)=h^0(\cO_{\bP^{n}}(d-n-1))$.
\end{proposition}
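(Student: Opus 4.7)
The approach is to exhibit, at the Fermat hypersurface, an explicit first-order deformation whose $(n-1)$-th Yukawa power induces an isomorphism, and then conclude by semicontinuity.

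Let $Y = V(F) \subset \bP^n$ be a smooth hypersurface of degree $d \geq n+1$, with Jacobian ring $R = R_F$, which is Gorenstein with socle in degree $N = (n+1)(d-2)$. By the Griffiths/Donagi/Green theory recalled in Example \ref{EX:hypers} and Remark \ref{REM:SLP}, one has isomorphisms $H^0(\omega_Y) \simeq R^{d-n-1}$ and $H^{n-1}(\cO_Y) \simeq R^{nd-n-1}$, while the image of the Kodaira-Spencer map coming from the family of smooth degree-$d$ hypersurfaces in $\bP^n$ is identified with $R^d \subset H^1(T_Y)$. Under these identifications, for any $\xi \in R^d$ the Yukawa coupling
$$\xi^{n-1}\cdot \,:\, H^0(\omega_Y) \;\longrightarrow\; H^{n-1}(\cO_Y)$$
is simply multiplication by $\xi^{n-1}$ in $R$, sending $R^{d-n-1}$ to $R^{nd-n-1}$. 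Gorenstein duality gives $\dim R^{d-n-1} = \dim R^{N-(d-n-1)} = \dim R^{nd-n-1}$, so to establish $I$-maximal variation for $Y$ as a hypersurface it suffices to produce a single $\xi \in R^d$ for which this multiplication is injective.

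Next I specialize to the Fermat hypersurface $Y_d = V(F_d)$, so $R = R_d$ and $J_d = (x_0^{d-1}, \ldots, x_n^{d-1})$, and I choose the deformation $\xi = [H^d] \in R_d^d$ with $H = x_0 + \cdots + x_n$. This is genuinely nonzero: taking $G = 1$ in Lemma \ref{LEM:MUNSTER} shows $H^{d(n-1)} = (H^d)^{n-1} \notin J_d$, so $H^d \notin J_d$. Now $\xi^{n-1} = [H^{d(n-1)}]$, and Lemma \ref{LEM:MUNSTER} applied with $k = d-n-1$ shows precisely that the multiplication
$$\cdot\, H^{d(n-1)} \,:\, R_d^{d-n-1} \;\longrightarrow\; R_d^{nd-n-1}$$
is injective, hence an isomorphism by the dimension match above. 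Therefore the Fermat hypersurface $Y_d$ has $I$-maximal variation as a hypersurface of $\bP^n$.

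Finally, to pass from $Y_d$ to the general smooth hypersurface of degree $d$, I use a standard openness argument: in a family of smooth hypersurfaces of degree $d$ the graded dimensions of the Jacobian ring are locally constant, and the condition ``there exists $\xi \in R^d$ such that multiplication by $\xi^{n-1}$ on $R^{d-n-1}$ has full rank'' is Zariski open on the base (lower semicontinuity of the corank of a family of matrices, together with the fact that a working $\xi$ extends algebraically to a neighborhood of any point where it exists). Since the condition holds at the Fermat point, it holds on a dense open subset of $\bP(H^0(\cO_{\bP^n}(d)))$, proving $\delta_M(\cO_{\bP^n}(d)) = h^0(\omega_Y)$. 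The substantive input is Lemma \ref{LEM:MUNSTER}, which has already been established; the remainder is the dictionary between Yukawa coupling and Jacobian-ring multiplication plus the semicontinuity observation.
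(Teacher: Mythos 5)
Your proposal is correct and follows essentially the same route as the paper: specialize to the Fermat hypersurface, use Lemma \ref{LEM:MUNSTER} (with $\xi=[H^d]$, so that $\xi^{n-1}=[H^{d(n-1)}]$ acts injectively on $R^{d-n-1}$, hence bijectively by Gorenstein duality), and conclude for the general hypersurface by semicontinuity. You merely spell out the Jacobian-ring dictionary and the openness argument that the paper leaves implicit.
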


\begin{proof}
Let $Y$ be a Fermat hypersurface in $\bP^n$ of degree $d\geq n+1$. Then the Yukawa coupling associated to $Y$ is generically an isomorphism by Lemma \ref{LEM:MUNSTER}. By semicontinuity, this holds also for the general hypersurface in $\bP^n$ of degree $d\geq n+1$. In particular, $\delta_M(\cO_{\bP^n}(d))$ is maximal.

\end{proof}

As already said in the introduction, the above proposition can be also seen as a consequence of the Theorem recalled in Remark \ref{REM:SUNTOLEF} and the discussion in the Remark \ref{REM:SLP} and Lemma \ref{LEM:SLPMAXVAR}.

\end{document}